\newtheorem{Theorem}{Theorem}
\newtheorem{Corollary}[Theorem]{Corollary}
\newtheorem{Proposition}[Theorem]{Proposition}
\newtheorem{Lemma}[Theorem]{Lemma}
\newtheorem{Example}[Theorem]{Example}
\DeclareMathOperator\csm{csm}
\DeclareMathOperator\ssm{ssm}
\DeclareMathOperator\init{init}
\renewcommand\ss{\scriptstyle}
\title{Generic pipe dreams, lower-upper varieties, and Schwartz--MacPherson classes}
\author{Allen Knutson}
\address{Allen Knutson, Cornell University, Ithaca, New York}
\email{allenk@math.cornell.edu}
\author{Paul Zinn-Justin}
\address{Paul Zinn-Justin, School of Mathematics and Statistics, The University of Melbourne, 
Victoria 3010, Australia}
\email{pzinn@unimelb.edu.au}
\begin{document}
\begin{abstract}We recall the {\em lower-upper varieties} $E_w$ from [Knutson '05] 
  and give a formula for their equivariant cohomology classes, as a sum
  over {\em generic pipe dreams}. We recover as limits the classic
  and bumpless pipe dream formul\ae\ for double Schubert polynomials.
  As a byproduct, we obtain a formula for the degree of the $n$th
  commuting variety as a sum of powers of $2$.

  Generic pipe dreams also appear in the Segre--Schwarz--MacPherson
  analogue of the AJS/Billey formula, and when computing the Chern--Schwarz--MacPherson class
  of the orbit $B_- w B_+ \subseteq Mat_{k\times n}$
  or of a double Bruhat cell $B_-u B_+ \cap B_+ v B_-$.
\end{abstract}

\def\rem#1{}

\maketitle

\newcommand\lie[1]{{\mathfrak #1}}
\newcommand\into\hookrightarrow
\newcommand\onto\twoheadrightarrow
\newcommand\union\cup

\newcommand\defn[1]{{\bf #1}}
\newcommand\ZZ{{\mathbb Z}}
\newcommand\CC{{\mathbb C}}
\newcommand\RR{{\mathbb R}}
\newcommand\iso\cong
\newcommand\dom\backslash
\newcommand\barX{{\overline X}}
\newcommand\im{{\rm im}}
\newcommand\Tr{{\rm Tr}}
\newcommand\adj{{\rm adj}}
\newcommand\junk[1]{}

\newcommand\plaqctr[1]{\begin{tikzpicture}[baseline={([yshift=-\the\dimexpr\fontdimen22\textfont2\relax]current  bounding  box.center)}]\plaq{#1}\end{tikzpicture}}

%
%
%
\makeatletter
\newcommand{\gettikzxy}[3]{
  \tikz@scan@one@point\pgfutil@firstofone#1\relax
\pgfmathsetmacro{#2}{\the\pgf@x/\linkpatternunit}
\pgfmathsetmacro{#3}{\the\pgf@y/\linkpatternunit}
}
\tikzset{label anchor/.code={%
    \let\tikz@auto@anchor=\pgfutil@empty
    \def\tikz@anchor{#1}
  },
  label anchor/.default=center
}
\makeatother
%
\tikzset{arrow/.style={postaction={decorate,thick,decoration={markings,mark = at position #1 with {\arrow{>}}}}},arrow/.default=0.5}
\tikzset{invarrow/.style={postaction={decorate,thick,decoration={markings,mark = at position #1 with {\arrow{<}}}}},invarrow/.default=0.5}
\newdimen\linkpatternunit%
\newcount\linkpatternsize%
\newcount\lpsize
%
\newif\iflinkpatterninverted
\newif\iflinkpatterntikzstarted
\newif\iflinkpatternboxed
\newif\iflinkpatternaxis
\newif\iflinkpatternstraightlines
\newif\iflinkpatternnumbered
\newif\iflinkpatternalias
\newif\iflinkpatternnode
\newif\iflinkpatterncentered
\newcount\linkpatternfused
%
\pgfkeys{/linkpattern/.cd,centered/.is if=linkpatterncentered,inverted/.is if=linkpatterninverted,numbered/.is if=linkpatternnumbered,tikzstarted/.is if=linkpatterntikzstarted,straight lines/.is if=linkpatternstraightlines,boxed/.is if=linkpatternboxed,axis/.is if=linkpatternaxis,vertexcolor/.store in=\linkpatternvertexcolor,edgecolor/.store in=\linkpatternedgecolor,boxcolor/.code={\linkpatternboxedtrue\def\linkpatternboxcolor{#1}},tikzoptions/.style={every linkpattern/.append style={#1}},size/.code={\linkpatternsize=#1},numbering0/.code={\def\lpnumbering{#1}\def\linkpatternnumbering{#1}},numbering/.style={numbered,numbering0={#1}},unit/.code={\linkpatternunit=#1},height/.store in=\linkpatternheight,shape/.store in=\linkpatternshape,looseness/.store in=\linkpatternlooseness,squareness/.store in=\linkpatternsquareness,extra space/.store in=\linkpatternextraspace,width/.store in=\linkpatternwidth,alias/.is if=linkpatternalias,pos/.store in=\linkpatternpos,labeloptions/.style={labeloptionslist/.append style={#1}},labeloptionslist/.style={inner sep=2pt,font=\scriptsize,execute at begin node=$,execute at end node=$,label anchor={#1+180}},nodeon/.is if=linkpatternnode,node/.style={nodeon,nodeoptionslist/.append style={#1}},nodeoptionslist/.style={},
pipedream/.style={shape=pipedream,looseness=0,straight lines,numbering0=tangle},
tangle/.style={shape=tangle,numbering0=tangle},
every linkpattern/.style={x=\linkpatternunit,y=\linkpatternunit},
fused/.code={\linkpatternfused=#1},
%
vertex/.style={circle,thin,solid,draw=black,fill=\linkpatternvertexcolor,inner sep=1.5pt,draw opacity=1,transform shape},
edge/.style={very thick,solid,draw=\linkpatternedgecolor,draw opacity=1}}
%
\linkpatterncenteredfalse%
\linkpatterninvertedfalse%
\linkpatternnumberedfalse%
\linkpatterntikzstartedfalse%
\linkpatternboxedfalse%
\linkpatternaxistrue%
\linkpatternaliastrue%
\linkpatternunit=0.6cm%
\linkpatternsize=0%
\linkpatternfused=1%
\linkpatternstraightlinesfalse%
\def\linkpatternlooseness{0.2}
\def\linkpatternsquareness{0.35}
\def\linkpatternvertexcolor{red}%
\def\linkpatternedgecolor{blue}%
\def\linkpatternboxcolor{none}%
\def\linkpatternheight{0}
\def\linkpatternwidth{0}
\def\linkpatternshape{default}
\def\linkpatternnumbering{default}
\def\linkpatternpos{(0,0)}
\def\linkpatternextraspace{0}
\linkpatternnodefalse
%
%
\def\firstchar#1#2\empty{#1}%
\def\linkpatterndo#1#2{
\edef\param{\csname linkpattern#2\endcsname}
\edef\firstcharparam{\expandafter\firstchar\param\empty}
\expandafter\ifcat\firstcharparam a
\expandafter\ifx\csname linkpattern#1\param\endcsname\relax
\csname linkpattern#1unknown\endcsname
\else
\csname linkpattern#1\csname linkpattern#2\endcsname\endcsname
\fi
\else
\csname linkpattern#1unknown\endcsname
\fi
}%
\def\linkpatterncoorddefault{\xdef\lpcoordx{\x}\xdef\lpcoordy{0}\xdef\lpangle{90}}%
\def\linkpatterncoordtangle{\ifnum\x>\lphalfsize\pgfmathparse{\lpsize+1-\x}\xdef\lpcoordx{\pgfmathresult}\xdef\lpcoordy{\lpheight}\xdef\lpangle{270}\else\xdef\lpcoordx{\x}\xdef\lpcoordy{-\lpheight}\xdef\lpangle{90}\fi}
\def\linkpatterncoordcircle{%
 \pgfmathparse{180+(\x-1)*360/\lpsize}
 \xdef\lpangle{\pgfmathresult}
 \pgfmathparse{-cos(\lpangle)}
\xdef\lpcoordx{\pgfmathresult}
 \pgfmathparse{-sin(\lpangle)}
 \xdef\lpcoordy{\pgfmathresult}
}%
\def\linkpatterncoordpipedream{\ifnum\x>\lphalfsize\pgfmathparse{\lpsize+1-\x-0.5}\xdef\lpcoordx{\pgfmathresult}\xdef\lpcoordy{0}\xdef\lpangle{270}\else\pgfmathparse{0.5-\x}\xdef\lpcoordy{\pgfmathresult}\xdef\lpcoordx{0}\xdef\lpangle{0}\fi}
\def\linkpatterncoordrectangle{
\ifnum\x>\lptqsize
\pgfmathparse{\lpsize+1-\x-0.5}\xdef\lpcoordx{\pgfmathresult}\xdef\lpcoordy{0}\xdef\lpangle{270}
\else\ifnum\x>\lphalfsize
\pgfmathparse{\x-\lptqsize-0.5}\xdef\lpcoordy{\pgfmathresult}\xdef\lpcoordx{\linkpatternwidth}\xdef\lpangle{180}
\else\ifnum\x>\linkpatternheight
\pgfmathparse{\x-\linkpatternheight-0.5}\xdef\lpcoordx{\pgfmathresult}\xdef\lpcoordy{-\linkpatternheight}\xdef\lpangle{90}
\else
\pgfmathparse{0.5-\x}\xdef\lpcoordy{\pgfmathresult}\xdef\lpcoordx{0}\xdef\lpangle{0}
\fi\fi\fi
}%
\def\linkpatterncoordunknown{\message{link pattern: unknown shape}}%
%
\def\linkpatterndrawaxisdefault{\draw (1-\linkpatternextraspace,0) -- (\lpsize+\linkpatternextraspace,0);}%
\def\linkpatterndrawaxistangle{\draw (1-\linkpatternextraspace,-\lpheight) -- (\lphalfsize+\linkpatternextraspace,-\lpheight) (1-\linkpatternextraspace,\lpheight) -- (\lphalfsize+\linkpatternextraspace,\lpheight);}%
\def\linkpatterndrawaxiscircle{\draw (0,0) circle (1);}%
\def\linkpatterndrawaxispipedream{\draw (0,-\lphalfsize) -- (0,0) -- (\lphalfsize,0);}%
\def\linkpatterndrawaxisrectangle{\draw (0,-\linkpatternheight) rectangle (\linkpatternwidth,0);}%
\def\linkpatterndrawaxisunknown{\message{link pattern: unknown shape}}%
%
\def\linkpatterndrawboxdefault{\draw[fill=\linkpatternboxcolor] (0.5-\linkpatternextraspace,0) rectangle (\lpsize+0.5+\linkpatternextraspace,\lpheight);}%
\def\linkpatterndrawboxtangle{\draw[fill=\linkpatternboxcolor] (0.5-\linkpatternextraspace,-\lpheight) rectangle (\lphalfsize+0.5+\linkpatternextraspace,\lpheight);}
\def\linkpatterndrawboxcircle{\draw[fill=\linkpatternboxcolor] (0,0) circle (1);}%
\def\linkpatterndrawboxunknown{\message{link pattern: unknown shape}}%
\def\linkpatterndrawboxpipedream{\draw[fill=\linkpatternboxcolor] (0,-\lphalfsize) rectangle (\lphalfsize,0);}%
\def\linkpatterndrawboxrectangle{\draw[fill=\linkpatternboxcolor] (0,-\linkpatternheight) rectangle (\linkpatternwidth,0);}%
%
\def\linkpatternsetsizeunknown{
\global\lpsize=\linkpatternsize
\if\linkpatternheight0
\xdef\maxsep{0}
\foreach \x/\xx in \mylist%
{%
\edef\tempx{\withoutprime{\x}}
\edef\tempxx{\withoutprime{\xx}}
\pgfmathparse{max(\maxsep,abs(\tempx-\tempxx))}
\xdef\maxsep{\pgfmathresult}
}%
\pgfmathparse{0.25+0.8*\linkpatternsquareness*\maxsep}
\xdef\lpheight{\pgfmathresult}
\else
\xdef\lpheight{\linkpatternheight}
\fi
}
\def\linkpatternsetsizerectangle{
\pgfmathtruncatemacro{\tempsize}{2*\linkpatternwidth+2*\linkpatternheight}
\global\lpsize=\tempsize
\pgfmathtruncatemacro{\tempsize}{\linkpatternwidth+2*\linkpatternheight}
\xdef\lptqsize{\tempsize}
}%
%
\newcount\tempsize
\def\linkpatternrightmostunknown{
\global\lpsize=0
\global\tempsize=0
\foreach\x/\labx in \linkpatternnumbering
{
\edef\tempx{\withoutprime{\x}}
\ifnum\lpsize<\tempx\global\lpsize=\tempx\fi
\global\advance\tempsize by 1
}
\ifnum\tempsize>\lpsize\global\lpsize=\tempsize\fi
}%
\def\linkpatternrightmostdefault{
\global\lpsize=0
\global\tempsize=0
\foreach \x/\y in \mylist
{
\edef\tempx{\withoutprime{\x}}
\ifnum\lpsize<\tempx\global\lpsize=\tempx\fi
\ifx\x\y
\global\advance\tempsize by 1
\else
\edef\tempy{\withoutprime{\y}}
\ifnum\lpsize<\tempy\global\lpsize=\tempy\fi%
\global\advance\tempsize by 2
\fi
}
\ifnum\tempsize>\lpsize\global\lpsize=\tempsize\fi
}%
\def\linkpatternrightmosttangle{
\global\lpsize=0
\global\tempsize=0
\foreach \x/\y in \mylist
{
\edef\tempx{\withoutprime{\x}}
\ifnum\lpsize<\tempx\global\lpsize=\tempx\fi
\ifx\x\y
\global\advance\tempsize by 1
\else
\edef\tempy{\withoutprime{\y}}
\ifnum\lpsize<\tempy\global\lpsize=\tempy\fi%
\global\advance\tempsize by 2
\fi
}
\global\advance\lpsize by\lpsize
\ifnum\tempsize>\lpsize\global\lpsize=\tempsize\fi
}%
\def\linkpatternrightmosthalftangle{\linkpatternrightmosttangle}
%
\def\linkpatternnumberingdefault{\xdef\lpnumbering{1,...,\lpsize}}%
\def\linkpatternnumberingtangle{%
\xdef\lpnumbering{1,...,\lphalfsize}
\foreach\x in {1,...,\lphalfsize}
{\xdef\lpnumbering{\lpnumbering,\x'}}
}%
\def\linkpatternnumberinghalftangle{%
\xdef\lpnumbering{1,...,\lphalfsize}
\foreach\x in {1,...,\lphalfsize}
{\xdef\lpnumbering{\lpnumbering,\x'/}}
}%
\def\linkpatternnumberingunknown{%
}%
\newcommand\linkpattern[2][]{
{
\pgfkeys{/linkpattern/.cd,#1}
\edef\mylist{#2}
\def\primetest##1'{}%
\def\hasaprime##1{\expandafter\primetest##1''}
\def\internalwithoutprime##1'{##1}%
\def\withoutprime##1{\if\hasaprime##1 %
\expandafter\internalwithoutprime##1\else ##1\fi}%
\iflinkpatternnumbered%
\iflinkpatterninverted
\tikzset{/linkpattern/lbl/.style n args={3}{label={[/linkpattern/labeloptionslist=-##1,##3] ##1:##2}}}%
\else%
\tikzset{/linkpattern/lbl/.style n args={3}{label={[/linkpattern/labeloptionslist=##1,##3] ##1:##2}}}%
\fi%
\else%
\tikzset{/linkpattern/lbl/.style={}}%
\fi%
\tikzifinpicture{\linkpatterntikzstartedtrue%
\begin{scope}[shift=\linkpatternpos,/linkpattern/every linkpattern]
}{%
\linkpatterntikzstartedfalse%
\iflinkpatterncentered
\begin{tikzpicture}[baseline=(current  bounding  box.center),/linkpattern/every linkpattern]%
\else
\begin{tikzpicture}[baseline=0,/linkpattern/every linkpattern]%
\fi
}%
\begin{scope}[local bounding box=link pattern box]
\iflinkpatterninverted%
\begin{scope}[yscale=-1]%
\fi%
\linkpatterndo{setsize}{shape}
\ifnum\lpsize=0
\linkpatterndo{rightmost}{numbering}
\fi
\pgfmathtruncatemacro{\lphalfsize}{\lpsize/2}
\linkpatterndo{numbering}{numbering}
\iflinkpatternboxed
\linkpatterndo{drawbox}{shape}
\else
\iflinkpatternaxis
\linkpatterndo{drawaxis}{shape}
\fi
\fi
\foreach\xx/\xlab/\opt in \lpnumbering
{
\ifx\xlab\opt\def\opt{}\fi
\if\hasaprime\xx %
\pgfmathtruncatemacro{\xx}{\lpsize+1-\withoutprime{\xx}}
\fi
%
%
\ifnum\linkpatternfused>1
\pgfmathsetmacro{\x}{0.4*(0.5+\linkpatternfused*(0.5+floor((\xx-1)/\linkpatternfused)))+0.6*\xx}
\else
\def\x{\xx}
\fi
\linkpatterndo{coord}{shape}
\iflinkpatternalias\def\xlabb{\xlab}\else\def\xlabb{\xx}\fi
\path (\lpcoordx,\lpcoordy) coordinate[/linkpattern/vertex,/linkpattern/lbl={\lpangle+180}{\xlab}{\opt},alias=v\xlabb] (v\xx) ++(\lpangle:\linkpatternunit) coordinate[alias=vv\xlabb] (vv\xx); 
}
\foreach \a/\b/\c in \mylist
{
\if\hasaprime\a %
\pgfmathtruncatemacro{\a}{\lpsize+1-\withoutprime{\a}}
\fi
\ifx\b\c\def\c{}\fi
\draw[/linkpattern/edge]
\ifx\a\b
(v\a)
\c
--
++(0,\lpheight);
\else
\pgfextra{
\if\hasaprime\b %
\pgfmathtruncatemacro{\b}{\lpsize+1-\withoutprime{\b}}
\fi
\gettikzxy{(v\a)}{\ax}{\ay}
\gettikzxy{(v\b)}{\bx}{\by}
\gettikzxy{(vv\a)}{\axx}{\ayy}
\gettikzxy{(vv\b)}{\bxx}{\byy}
\pgfmathsetmacro{\dist}{sqrt((\ax-\bx)*(\ax-\bx)+(\ay-\by)*(\ay-\by))}
\pgfmathsetmacro{\abx}{(\axx-\ax)*\dist*\linkpatternsquareness+(\bx-\ax)*\linkpatternlooseness)}
\pgfmathsetmacro{\aby}{(\ayy-\ay)*\dist*\linkpatternsquareness+(\by-\ay)*\linkpatternlooseness)}
\pgfmathsetmacro{\bax}{(\bxx-\bx)*\dist*\linkpatternsquareness+(\ax-\bx)*\linkpatternlooseness)}
\pgfmathsetmacro{\bay}{(\byy-\by)*\dist*\linkpatternsquareness+(\ay-\by)*\linkpatternlooseness)}
}
(v\a)
\c
\iflinkpatternstraightlines
\pgfextra{
\pgfmathsetmacro{\t}{((\ax-\bx)*\bay-(\ay-\by)*\bax)/(\aby*\bax-\abx*\bay)}
\pgfmathsetmacro{\abx}{\t*\abx}
\pgfmathsetmacro{\aby}{\t*\aby}
}
[rounded corners=0.2\linkpatternunit] -- ++(\abx,\aby) -- (v\b);
\else
.. controls ++(\abx,\aby) and ++(\bax,\bay) .. 
\fi
(v\b);
\fi
}
\end{scope}
\iflinkpatternnode
\node[fit=(link pattern box),/linkpattern/nodeoptionslist] {};
\fi
\iflinkpatterninverted
\end{scope}
\fi
\iflinkpatterntikzstarted
\end{scope}
\else%
\end{tikzpicture}%
\fi%
}}%
%
%
\newcommand\tanglelinkpattern[3][]{%
{
\pgfkeys{/linkpattern/.cd,#1}
\iflinkpatterninverted
\begin{tikzpicture}[/linkpattern/every linkpattern,baseline=\linkpatternunit]%
\else
\begin{tikzpicture}[/linkpattern/every linkpattern,baseline=-\linkpatternunit]%
\fi
\linkpattern[#1,tikzstarted,numbered=false]{#3}
\pgfmathtruncatemacro{\lptempsize}{2*\linkpatternsize}
\iflinkpatterninverted
\begin{scope}[yshift=0.5*\linkpatternunit]
\else
\begin{scope}[yshift=-0.5*\linkpatternunit]
\fi
\linkpattern[tangle,#1,tikzstarted,size=\lptempsize,
numbering=halftangle,
height=0.5]{#2}
\end{scope}
\end{tikzpicture}%
}}
%
%
\newcommand\diag[4][]{%
\pgfkeys{/linkpattern/.cd,#1}
\iflinkpatterntikzstarted\else%
\begin{tikzpicture}[scale=0.5]
\fi%
\iflinkpatterninverted%
\begin{scope}[yscale=-1]%
\fi%
\draw (0,0) grid (#2,#3);
\edef\mylist{#4}
\foreach\y/\x/\z in \mylist
{
\ifx\x\z
\draw[decorate,decoration={zigzag,
amplitude=1pt,segment length=5pt}]
(\x-0.5,#3) -- (\x-0.5,\y-0.5) node[circle,fill=black,inner sep=2pt] {} -- (#2,\y-0.5);
\else
\node at (\x-0.5,\y-0.5) {$\z$};
\fi
}
\iflinkpatterninverted
\end{scope}
\fi
\iflinkpatterntikzstarted\else%
\end{tikzpicture}%
\fi%
}
%
\makeatletter
\tikzset{circle split part fill/.style  args={#1,#2}{%
 alias=tmp@name,
  postaction={%
    insert path={
     \pgfextra{%
     \pgfpointdiff{\pgfpointanchor{\pgf@node@name}{center}}%
                  {\pgfpointanchor{\pgf@node@name}{east}}%
     \pgfmathsetmacro\insiderad{\pgf@x}
      \fill[#1] (\pgf@node@name.base) ([xshift=-\pgflinewidth]\pgf@node@name.east) arc
                          (0:180:\insiderad-\pgflinewidth)--cycle;
      \fill[#2] (\pgf@node@name.base) ([xshift=\pgflinewidth]\pgf@node@name.west)  arc
                           (180:360:\insiderad-\pgflinewidth)--cycle;                    }}}}}  
 \makeatother
\tikzset{bdot/.style={circle,circle split,draw,circle split part fill={black,white},thin,inner sep=1pt}}%
\tikzset{wdot/.style={circle,circle split,draw,circle split part fill={white,black},thin,inner sep=1pt}}%
%
%
%
\newcommand\circlelinkpattern[2][]{
{
\pgfkeys{/linkpattern/.cd,#1}
\iflinkpatterntikzstarted\else%
\begin{tikzpicture}[/linkpattern/every linkpattern]%
\fi%
\iflinkpatterninverted%
\begin{scope}[yscale=-1]%
\fi%
\global\lpsize=\linkpatternsize
\edef\mylist{#2}
\foreach \x/\y in \mylist
{
\ifnum\x>\lpsize\global\lpsize=\x\fi
\ifnum\y>\lpsize\global\lpsize=\y\fi
}
%
\iflinkpatternaxis
\draw (0,0) circle (1);
\fi
\foreach\x in {1,...,\lpsize}
{
\pgfmathparse{(0.3*floor((\x-1)/\linkpatternfused)+0.7*((\x-0.5)/\linkpatternfused-0.5))*\linkpatternfused*360/\lpsize}
\coordinate[/linkpattern/vertex] (v\x) at (\pgfmathresult:1);
}
\foreach \x/\y/\z in \mylist
{
\ifx\y\z%
\draw[/linkpattern/edge] (v\x) .. controls ($0.5*(v\x)$) and  ($0.5*(v\y)$) .. (v\y);
\else
\draw[/linkpattern/edge] \z (v\x) .. controls ($0.5*(v\x)$) and  ($0.5*(v\y)$) .. (v\y);
\fi
}
\iflinkpatternnumbered%
\pgfmathparse{\lpsize/\linkpatternfused}
\global\lpsize=\pgfmathresult
\def\linkpatternnumbering{1,...,\lpsize}
\newdimen\angle
\foreach\x/\xx/\opt in \linkpatternnumbering
{
  \pgfmathsetmacro{\angle}{360/\lpsize*(\x-1)}
\ifx\xx\opt%
  \node[outer sep=1pt,anchor=180+\angle] at (\angle:1) {$\scriptstyle\xx$}; 
\else
  \node[outer sep=1pt,anchor=180+\angle,\opt] at (\angle:1) {$\scriptstyle\xx$}; 
\fi
}
\fi%
\iflinkpatterninverted%
\end{scope}
\fi%
\iflinkpatterntikzstarted\else%
\end{tikzpicture}%
\fi%
}}%
%
\newdimen{\loopcellsize}\setlength{\loopcellsize}{0.75cm}
\tikzset{bgplaq/.style={draw=black,fill=\linkpatternboxcolor}}
\def\plaqwest{}
\def\plaqeast{}
\def\plaqnorth{}
\def\plaqsouth{}
%
\def\plaqa{
\draw[/linkpattern/edge,\plaqwest,\plaqnorth] (0,0.5) .. controls (0,0.2) and (-0.2,0) .. (-0.5,0);
\draw[/linkpattern/edge,\plaqeast,\plaqsouth] (0,-0.5) .. controls (0,-0.2) and (0.2,0) .. (0.5,0);
}
\def\plaqb{
\draw[/linkpattern/edge,\plaqwest,\plaqsouth] (0,-0.5) .. controls (0,-0.2) and (-0.2,0) .. (-0.5,0);
\draw[/linkpattern/edge,\plaqeast,\plaqnorth] (0,0.5) .. controls (0,0.2) and (0.2,0) .. (0.5,0);
}
\def\plaqc{
\draw[/linkpattern/edge,\plaqsouth,\plaqnorth] (0,0.5) -- (0,-0.5);
\draw[/linkpattern/edge,\plaqwest,\plaqeast] (0.5,0) -- (-0.5,0);
}
\def\plaqj{
\draw[/linkpattern/edge,\plaqwest,\plaqnorth] (0,0.5) .. controls (0,0.2) and (-0.2,0) .. (-0.5,0);
}
\def\plaqr{
\draw[/linkpattern/edge,\plaqeast,\plaqsouth] (0,-0.5) .. controls (0,-0.2) and (0.2,0) .. (0.5,0);
}
\def\plaqh{
\draw[/linkpattern/edge,\plaqwest,\plaqnorth] (-0.5,0) -- (0.5,0);
}
\def\plaqv{
\draw[/linkpattern/edge,\plaqwest,\plaqnorth] (0,-0.5) -- (0,0.5);
}
\pgfkeys{/linkpattern/.cd,west/.store in=\plaqwest,east/.store in=\plaqeast,north/.store in=\plaqnorth,south/.store in=\plaqsouth}
\def\plaqname{plaq}
\newcommand\plaq[2][]{
\node[bgplaq,rectangle,draw,minimum size=\loopcellsize,transform shape] (\plaqname) {};
\useasboundingbox;
\pgfkeys{/linkpattern/.cd,#1}
\ifx#2\empty\else
\begin{scope}[x=\loopcellsize,y=\loopcellsize]
\csname plaq#2\endcsname
\end{scope}\fi
}
\newcommand\halfplaq[1][]{
\node[draw=none,fill=none,rectangle,minimum size=\loopcellsize,transform shape] (\plaqname) {};
\useasboundingbox;
\pgfkeys{/linkpattern/.cd,#1}
\begin{scope}[x=\loopcellsize,y=\loopcellsize]
\draw[bgplaq] (-0.5,-0.5) -- ++(1,1) -- ++(-1,0) -- cycle;
\draw[/linkpattern/edge,\plaqwest,\plaqnorth] (0,0.5) .. controls (0,0.2) and (-0.2,0) .. (-0.5,0);
\end{scope}
}
\tikzset{loop/.code={\def\plaqname{loop-\the\pgfmatrixcurrentrow-\the\pgfmatrixcurrentcolumn}},loop/.append style={matrix,row sep={\loopcellsize,between origins},column sep={\loopcellsize,between origins}}}
\def\linkpatternboxcolor{pink!50!white}

\section{Generic pipe dreams}\label{sec:intro}
Define a \defn{generic pipe dream tile} as any of the following:
\begin{center}
\begin{tikzpicture}
\matrix[column sep=.8cm]{
\plaq{a} & \plaq{j} & \plaq{r} & \plaq{c} & \plaq{h} & \plaq{v} & \plaq{}
\\
\text{bump} \\
};
\end{tikzpicture}
\end{center}

Each pipe will carry a distinct label, generally from $[n] := \{1,\ldots,n\}$.
\junk{
  It will frequently be useful
  to consider tiles with one or zero pipes as bearing invisible pipes,
  i.e. ``invisible'' may be our $(n+1)$st label.
  For computational purposes, it is more efficient to disallow pipes with
  the same label to cross; for mathematical purposes, it is more natural
  to allow this but to sum over the two possibilities. 
  The blank edges may be fruitfully considered to bear pipes drawn in white,
  after which decision, in a fully blank tile one cannot be sure whether
  the two white pipes have crossed or not.
  Since the two cases
  are indistinguishable when both pipes are white, we will go with the
  ``disallow'' approach. \rem{why do we need this anyway?}
}

We assemble these tiles into $n\times n$ squares (and later, into
quadrangulated discs), calling them \defn{generic pipe dreams} or
\defn{GPDs}. For our first two applications of GPDs we insist that the
pipes down the West side are numbered $1\ldots n$ in order, that
the pipes across the North side are numbered $w^{-1}(1),\ldots,w^{-1}(n)$
for $w \in \mathcal S_n$, and that the East and South sides are blank.
The set of such GPDs is denoted $GPDs(w)$.
\junk{(The inverses could be avoided here but will be motivated
in \S \ref{ssec:GmodP}.)}
Here is one of the 45 GPDs for $w = 1243$:
\begin{center}
  \begin{tikzpicture}\node[loop]{\plaq{a}&\plaq{c}&\plaq{j}&\plaq{v}\\\plaq{c}&\plaq{j}&\plaq{r}&\plaq{j}\\\plaq{c}&\plaq{h}&\plaq{j}&\plaq{}\\\plaq{j}&\plaq{}&\plaq{}&\plaq{}\\};
\foreach \x/\y in {1/1,2/2,3/4,4/3} {
  \path (loop-1-\x.north) node[above] {$\y$};
  \path (loop-\x-1.west) node[left] {$\x$};
  }
\end{tikzpicture}
\end{center}

\junk{
\begin{center}
\begin{tikzpicture}\node[loop] (l) {\plaq{c}&\plaq{a}&\plaq{j}\\\plaq{c}&\plaq{j}&\plaq{}\\\plaq{j}&\plaq{}&\plaq{}\\};
\foreach \x/\y in {1/3,2/1,3/2} {
  \path (loop-1-\x.north) node[above] {$\y$};
  \path (loop-\x-1.west) node[left] {$\x$};
}
\path (l.south) node[below] {$
  \begin{matrix}    
    (A+B)^4 \left(A+x_1-y_1\right) \left(A+x_2-y_1\right) \\
    \cdot \left(B-x_3+y_2\right) \left(B-x_2+y_3\right) \left(B-x_3+y_3\right)
  \end{matrix}$};
\begin{scope}[yshift=-4cm]
\node[loop] (l) {\plaq{c}&\plaq{j}&\plaq{v}\\\plaq{c}&\plaq{h}&\plaq{j}\\\plaq{j}&\plaq{}&\plaq{}\\};
\foreach \x/\y in {1/3,2/1,3/2} {
  \path (loop-1-\x.north) node[above] {$\y$};
  \path (loop-\x-1.west) node[left] {$\x$};
}
\path (l.south) node[below] {$
  \begin{matrix}
    (A+B)^3 \left(A+x_1-y_1\right) \left(A+x_2-y_1\right) 
    \left(A+x_2-y_2\right) \\
    \cdot \left(A+x_1-y_3\right) \left(B-x_3+y_2\right) \left(B-x_3+y_3\right)
  \end{matrix}
    $};
\end{scope}
\end{tikzpicture}
\end{center}
}

Define the \defn{generic pipe dream polynomial}
$G_w \in \ZZ[x_1,\ldots,x_n, y_1,\ldots,y_n, A, B]$
to be the sum $\sum_{\delta \in GPDs(w)} wt(\delta)$ over all GPDs $\delta$
for $w$, of a product of factors:
\begin{equation}\label{eq:wt}
wt(\delta) = \prod_{i,j \in [n]}
\begin{cases}
  A + x_i-y_j & \text{ if } \plaqctr{c}\ \plaqctr{v}\ \plaqctr{h} \text{ at }(i,j)\\[2mm]
  B -x_i+y_j & \text{ if } \plaqctr{} \text{ at }(i,j) \\[2mm]
  A+B  & \text{otherwise}
\end{cases}
\end{equation}
\junk{For example, the GPD above contributes\\
$(A+B)^7 \
\prod_{(i,j)=(3,4),(4,\ge 2)}(B-x_i+y_j) \
\prod_{(i,j)=(1,2),(1,4),(2,1),(3,1),(3,2)}(A+x_i-y_j)$.}

\smallskip
Let $w_0$ denote the longest element $\underline{n\ n-1\ldots 3\ 2\ 1} \in \mathcal S_n$.

\begin{Theorem}\label{thm:leadingTerms}
  \begin{enumerate}
  \item Consider the terms in $G_w$ with the highest power of $B$.
    The only GPDs that contribute to this $B$-leading form are the
    \defn{classic pipe dreams}, (i) with visible pipes in only the NW triangle
    and (ii) no two pipes crossing twice. The result is $B^{n^2-\ell(w)}$
    times the classic pipe dream formula for the double Schubert
    polynomial $S_w \in \ZZ[x_1,\ldots,x_n, y_1,\ldots,y_n]$,
    except evaluated at $S_w(A+x_1,\ldots,A+x_n, y_1,\ldots,y_n)$.
  \item Similarly, the only GPDs that contribute to the $A$-leading form are the
    \defn{bumpless pipe dreams} (albeit rotated $180^\circ$ from their
    definition in \cite{LLS-BPD}), with (i) no ``bump tiles'' \plaqctr{a}
    and (ii) no two pipes crossing twice. The result is $A^{n^2-\ell(w)}$
    times the bumpless pipe dream formula for $S_{w_0 w w_0}$,
    except evaluated at
    $S_{w_0w w_0}(B-x_n,\ldots, B-x_1,-y_n,\ldots,-y_1)$.
  \end{enumerate}
\end{Theorem}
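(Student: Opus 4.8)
\medskip
\noindent\textbf{Proof strategy.}
I would begin by reading the $A$- and $B$-degrees of each weight directly off \eqref{eq:wt}: a cell of type $\plaqctr{c}$, $\plaqctr{v}$ or $\plaqctr{h}$ contributes a factor of $B$-degree $0$ and $A$-degree $1$; a blank cell $\plaqctr{}$ contributes a factor of $B$-degree $1$ and $A$-degree $0$; and each of $\plaqctr{a}$, $\plaqctr{j}$, $\plaqctr{r}$ contributes $A+B$, of bidegree $(1,1)$. Hence, for $\delta\in GPDs(w)$,
\[
\deg_B wt(\delta)=n^2-\#\{\text{cells of }\delta\text{ of type }\plaqctr{c},\plaqctr{v},\plaqctr{h}\},\qquad
\deg_A wt(\delta)=n^2-\#\{\text{blank cells of }\delta\},
\]
and the $B$-leading coefficient of $wt(\delta)$ is gotten by keeping the whole factor from every $\plaqctr{c},\plaqctr{v},\plaqctr{h}$ and the coefficient of $B$ (namely $1$) from every other factor, and symmetrically for the $A$-leading coefficient.

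For (1) the easy half is the bound. The colored pipes of $\delta$ are $n$ arcs in the square joining the West side to the North side and realizing $w$; by the usual parity count, if the labels $i<j$ of two colored pipes form an inversion of $w$ then those two pipes cross an odd, hence positive, number of times. A crossing of two colored pipes is exactly a $\plaqctr{c}$ cell, and $\#\{\plaqctr{c}\}$ is the sum over pairs of pipes of their number of mutual crossings, so $\#\{\plaqctr{c}\}\geq\ell(w)$ and therefore $\deg_B wt(\delta)\leq n^2-\ell(w)$ for every $\delta$. The substance is equality. If $\deg_B wt(\delta)=n^2-\ell(w)$ then (a) $\delta$ has no $\plaqctr{v}$ or $\plaqctr{h}$ cell, so a colored pipe runs straight through a cell only when crossing another colored pipe there, and (b) there are exactly $\ell(w)$ colored crossings, i.e.\ no two colored pipes cross twice and non-inverting pairs do not cross at all. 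I would then prove the rigidity lemma: (a) and (b), together with the South and East sides being blank, confine all colored pipes to the NW triangle $\{(i,j):i+j\leq n+1\}$. The mechanism is that on any excursion of a colored pipe into a cell $(i,j)$ with $i+j\geq n+2$ the edges toward the blank South/East sides are unavailable, so the pipe must run straight through some cell, forcing there either a forbidden $\plaqctr{v}/\plaqctr{h}$, or a crossing with a pipe rising from the blank South side (impossible), or more crossings of that one pipe than the number of its inversions (finite, since non-inverting pairs cannot cross) can supply; chasing this inward along anti-diagonals, and using that a colored pipe may not cross itself, forces every such cell blank. On the NW triangle, conditions (a)--(b) are precisely the definition of a reduced classic pipe dream for $w$, its $+$-cells being the $\ell(w)$ crossings, so the extremal $\delta$'s are in bijection with the classic pipe dreams for $w$. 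Each contributes $B^{n^2-\ell(w)}\prod(A+x_i-y_j)$ to the $B$-leading form, the product over its $\plaqctr{c}$ cells $(i,j)$, and the classical pipe dream formula $S_w(x,y)=\sum_D\prod_{(i,j)\in D}(x_i-y_j)$, with $x_i\mapsto A+x_i$, assembles these into $B^{n^2-\ell(w)}S_w(A+x_1,\dots,A+x_n,y_1,\dots,y_n)$.

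For (2) I would run the mirror argument with blank cells in the role of $\plaqctr{c}$'s: prove $\#\{\plaqctr{}\}\geq\ell(w)$, hence $\deg_A wt(\delta)\leq n^2-\ell(w)$, and analyze equality, which now forces $\delta$ to have no bump cells $\plaqctr{a}$ and again no two colored pipes crossing twice. A $180^\circ$ rotation turns $\delta$ into a picture whose labeled pipes run from the (rotated) East and South sides and realize $w_0ww_0$, and under it the equality condition becomes the defining condition of a bumpless pipe dream for $w_0ww_0$ of \cite{LLS-BPD} (drawn rotated $180^\circ$), with no $\plaqctr{a}$ cells since in a bumpless pipe dream the two pipes of a cell always cross. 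Each extremal $\delta$ contributes $A^{n^2-\ell(w)}\prod(B-x_i+y_j)$ over its $\plaqctr{}$ cells $(i,j)$, cell $(i,j)$ of $\delta$ being cell $(n{+}1{-}i,n{+}1{-}j)$ of the bumpless pipe dream; substituting into the bumpless pipe dream formula for $S_{w_0ww_0}$, with the reversal of variable indices and the replacements $B-x$ for $x$ and $-y$ for $y$ that match a factor $B-x_i+y_j$ to a factor $x-y$, produces $A^{n^2-\ell(w)}S_{w_0ww_0}(B-x_n,\dots,B-x_1,-y_n,\dots,-y_1)$. (Both halves are shadows of the $A\leftrightarrow B$ symmetry of \eqref{eq:wt}; but a bijective version is clouded by the ``white'' pipes of a GPD not being determined by it, so I would keep (2) as a self-standing parallel to (1) rather than a formal reduction.)

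The main obstacle is the rigidity lemma in each part: that reaching the crossing bound with no straight-through colored cells (respectively the blank bound with no bump cells) already forces the entire classic- (respectively bumpless-) pipe-dream shape. Granting that, the degree bookkeeping above and the recognition of the two sums as the classic and bumpless pipe dream formul\ae\ for $S_w$ and $S_{w_0ww_0}$ are routine.
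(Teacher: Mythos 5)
Your overall architecture for part (1) — degree bookkeeping, the lower bound $\#\{\text{crossing tiles}\}\ge\ell(w)$ via the parity of crossings of inverting pairs, then a rigidity step identifying the extremal configurations with classic pipe dreams — is sound, and your bookkeeping of leading coefficients and variable substitutions in both parts is correct. But the rigidity step is exactly where the content lies, and your sketched mechanism does not yet close it: a colored pipe need not ``run straight through some cell'' to escape the NW triangle, since it can also be escorted outward by turning South-to-East at a bump or elbow tile, riding on the back of another pipe. The antidiagonal induction you allude to can be made to work (row $n$ admits no pipe entering from the South, so its unique pipe must turn North in column $1$; inductively row $r$ can carry pipes only in columns $\le n-r+1$, because moving East within a row requires a pipe entering from the South of that cell), but as written your case analysis omits the bump-escort case. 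The paper's device here is cleaner and worth knowing: put invisible pipes on all blank edges, so that a horizontal or vertical tile becomes a crossing of a visible pipe (going West$\to$North) with an invisible one (going South$\to$East); such a pair has non-interleaved endpoints on the boundary, hence crosses an even number of times, so every $h$/$v$ tile is part of a removable double crossing, and the same surgery that kills visible double crossings kills these too.

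The more serious gap is in part (2). There is no ``mirror argument'' giving $\#\{\text{blank tiles}\}\ge\ell(w)$ directly: blank tiles are not crossings of labeled pipes with fixed connectivity, so no parity count applies to them. The actual mechanism (and the paper's entire proof of part (2)) is a length count: each pipe $i$ traverses $i+w^{-1}(i)-1$ cells, so the total number of visible pipe-segments is $n^2$, i.e.\ one per tile on average; since (once $h$/$v$ tiles are handled) the tiles carry $2$, $1$, or $0$ visible segments, this forces $\#\{\text{crossing}\}+\#\{\text{bump}\}=\#\{\text{blank}\}$. Combined with $\#\{\text{crossing}\}\ge\ell(w)$ and $\#\{\text{bump}\}\ge 0$ this gives the bound, and equality simultaneously forces no bumps, no double crossings, and $\ell(w)$ blanks — which is precisely why the extremal configurations are (rotated) bumpless pipe dreams. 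Without this identity your claimed bound, and the deduction that equality kills all bump tiles, are unsupported. I would supply the length count explicitly and then your identification with rotated BPDs and the substitution $x\mapsto B-x$, $y\mapsto -y$ go through as you describe.
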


\begin{proof}

  \begin{enumerate}
  \item If two pipes cross in tile $s$ and then again in tile $t$,
    we can replace each crossing tile with a bump tile,
    swapping the two pipe colors in the range between $s$ and $t$.
    This decreases the number of $A+x_i-y_j$ factors, allowing us to
    find another two factors of $B$.

    Now, if any pipe labelled $i \in [n]$ goes through a horizontal tile
    \plaqctr{h}, we can consider there to be an invisible
    pipe crossing it going North.
    The $i$-pipe heads East but eventually comes out the North side,
    whereas the invisible pipe heads North but comes out the East side,
    so the pipes must cross a second time. Apply the same argument as above.

    Hence the SE triangle must be solid invisible pipes, and the NW triangle
    full of visible pipes, no two crossing twice.
  \item The total length, in number of squares traversed, of pipe $i$
    is $i+w^{-1}(i)-1$; summing over $i$ we get
    ${n+1\choose 2}+{n+1\choose 2}-n = n^2$.
    Hence each tile used contains one visible pipe, on average.
    Some tiles accomodate two visible pipes, some zero, so
    \#\plaqctr c + \#\plaqctr a = \#\plaqctr {}.  We want to minimize the
    number of \plaqctr{}, as those don't admit an $A$ term, so we must
    minimize the number of crosses (to $\ell(w)$, using no double
    crosses) and bumps (to zero).  \qedhere
  \end{enumerate}
\end{proof}

\begin{Example}Here are the GPDs for $w=2431$:

\begin{tikzpicture}\node[loop]{\plaq{c}&\plaq{a}&\plaq{a}&\plaq{j}\\\plaq{c}&\plaq{c}&\plaq{j}&\plaq{}\\\plaq{c}&\plaq{j}&\plaq{}&\plaq{}\\\plaq{j}&\plaq{}&\plaq{}&\plaq{}\\};\end{tikzpicture}\begin{tikzpicture}\node[loop]{\plaq{c}&\plaq{a}&\plaq{c}&\plaq{j}\\\plaq{c}&\plaq{a}&\plaq{j}&\plaq{}\\\plaq{c}&\plaq{j}&\plaq{}&\plaq{}\\\plaq{j}&\plaq{}&\plaq{}&\plaq{}\\};\end{tikzpicture}\begin{tikzpicture}\node[loop]{\plaq{c}&\plaq{a}&\plaq{j}&\plaq{v}\\\plaq{c}&\plaq{c}&\plaq{h}&\plaq{j}\\\plaq{c}&\plaq{j}&\plaq{}&\plaq{}\\\plaq{j}&\plaq{}&\plaq{}&\plaq{}\\};\end{tikzpicture}\begin{tikzpicture}\node[loop]{\plaq{c}&\plaq{a}&\plaq{c}&\plaq{j}\\\plaq{c}&\plaq{j}&\plaq{v}&\plaq{}\\\plaq{c}&\plaq{h}&\plaq{j}&\plaq{}\\\plaq{j}&\plaq{}&\plaq{}&\plaq{}\\};\end{tikzpicture}\begin{tikzpicture}\node[loop]{\plaq{c}&\plaq{j}&\plaq{v}&\plaq{v}\\\plaq{c}&\plaq{h}&\plaq{c}&\plaq{j}\\\plaq{c}&\plaq{h}&\plaq{j}&\plaq{}\\\plaq{j}&\plaq{}&\plaq{}&\plaq{}\\};\end{tikzpicture}

\junk{Summing up their weights, we find the generic pipe dream polynomial $G_w$:
\begin{multline*}
  \left(A^{5}+B^4(2A+x_1+x_2-y_2-y_3)+\cdots\right)
  \\
  \left(B -x_{3}+y_{4}\right)\left(B -x_{4}+y_{2}\right)\left(B -x_{4}+y_{3}\right)\left(B -x_{4}+y_{4}\right)
  \left(A +x_{1}-y_{1}\right)\left(A +x_{2}-y_{1}\right)\left(A +x_{3}-y_{1}\right)
\end{multline*}
where the $\cdots$ are terms of lower degree in $A$ or $B$.}

Only the first two GPDs are CPDs, and the $B$-leading form is related to
the double Schubert polynomial
$S_{2431} = (x_1-y_1)(x_2-y_1)(x_3-y_1)(x_1+x_2-y_2-y_3)$.

Similarly, only the last GPD is a BPD, and the $A$-leading form is related to
$S_{4213}=(x_1-y_1)(x_2-y_1)(x_1-y_2)(x_1-y_3)$.
\end{Example}

In the next sections we give two geometric applications
of GPD polynomials. 

\section{Lower-upper varieties}
We recall some constructions and results from \cite{Kn-uu}.
Let $B_-$, $B_+$ denote the groups of lower and upper triangular
invertible $n\times n$ matrices, respectively.
\junk{(Where $B_-$ is less
relevant we will abbreviate $B_+$ as $B$.) \rem{should we? there's already
  a $B$...}}
Define an action of
$B_- \times B_+$ on $(Mat_{n\times n})^2$ by
$ (b,c) \cdot (X,Y) := (b X c^{-1}, c Y b^{-1}) $.
\begin{Theorem}\cite{Kn-uu}
  Let $E \subseteq (Mat_{n\times n})^2$ denote the \defn{lower-upper scheme}
  $\{ (X,Y)\colon XY$ lower triangular, $YX$ upper triangular$\}$.
  Then $E$ is invariant under the above $(B_-\times B_+)$-action,
  and its components $(E_w\colon w\in \mathcal S_n)$ 
  are describable in two ways:
  \begin{eqnarray*}
    E_w &:=& \overline{ \{(X,Y) \in E\colon diag(XY) = w\cdot diag(YX)
               \text{ nonrepeating} \} } \\
  &=& \overline{(B_-\times B_+)\cdot \{(w,w^{-1} D)\colon D\text{ diagonal} \}}
  \end{eqnarray*}
\end{Theorem}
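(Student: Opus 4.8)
\medskip\noindent\emph{Proof idea.} The plan is to treat the two descriptions of $E_w$ in turn: show each is an irreducible closed subvariety of $E$ of dimension $n^2+n$, show the two agree, and finally show that together over $w\in\mathcal S_n$ they exhaust $E$. The elementary input used throughout is that $XY$ and $YX$ have the same characteristic polynomial for all $X,Y\in Mat_{n\times n}$; hence $diag(XY)$ and $diag(YX)$ are always the same multiset, so the phrase ``$diag(XY)=w\cdot diag(YX)$ nonrepeating'' in the first description determines a unique $w$ wherever it holds, and when in addition $XY$ is invertible with distinct eigenvalues one automatically has $X,Y\in GL_n$.

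Invariance of $E$ is a one-line check: $(bXc^{-1})(cYb^{-1})=b(XY)b^{-1}$ is lower triangular when $XY$ is and $b\in B_-$, and symmetrically $(cYb^{-1})(bXc^{-1})=c(YX)c^{-1}$ is upper triangular. The same computation applied to $(w,w^{-1}D)$ shows $(B_-\times B_+)\cdot\{(w,w^{-1}D)\colon D\text{ diagonal}\}\subseteq E$, since at such a point $XY=bDb^{-1}$ and $YX=c(w^{-1}Dw)c^{-1}$ with $D$ and $w^{-1}Dw$ diagonal. For $D$ with distinct diagonal entries the stabilizer of $(w,w^{-1}D)$ in $B_-\times B_+$ is $\{(b,\,w^{-1}bw)\colon b\text{ an invertible diagonal matrix}\}$, an $n$-dimensional torus; since $\dim(B_-\times B_+)=n^2+n$, each such orbit has dimension $n^2$, and since $D$ is recovered from the orbit point as the spectrum of $XY$, the union of these orbits over $D$ is irreducible of dimension $n^2+n$, with closure the $E_w$ of the second description. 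On the other hand $E$ is cut out of the $2n^2$-dimensional $(Mat_{n\times n})^2$ by the $n^2-n$ equations ``strictly-upper entries of $XY$ vanish, strictly-lower entries of $YX$ vanish'', so \emph{every} component of $E$ has dimension at least $n^2+n$.

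To match the two descriptions and identify the generic point, I would parametrize the open subset $E^\circ\subseteq E$ on which $XY$ is invertible with distinct eigenvalues, where $X,Y\in GL_n$. Set $R:=YX$, upper triangular with distinct diagonal; choosing its eigenvectors upper triangularly gives $R=PDP^{-1}$ with $P\in B_+$ and $D=diag(R)$. Then $L:=XY=XRX^{-1}$ is lower triangular with the same reordered spectrum, so $L=QD'Q^{-1}$ with $Q\in B_-$ and $D'=diag(L)=wDw^{-1}$ for a unique permutation $w$; the identity $(XP)D(XP)^{-1}=L=(Qw)D(Qw)^{-1}$ and the distinctness of the entries of $D$ force $(Qw)^{-1}XP$ to be a diagonal matrix $\Delta$. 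Hence $X=Qw\Delta P^{-1}$ and $Y=RX^{-1}=PD\Delta^{-1}w^{-1}Q^{-1}$, so $(X,Y)=(Q,\,P\Delta^{-1})\cdot(w,\,w^{-1}D_0)$ with $D_0=wDw^{-1}$; thus every point of $E^\circ$ lies in one of the second-description orbits, and the $w$ that appears is exactly the one of the first description. This exhibits $E^\circ$ as the disjoint union, over $w\in\mathcal S_n$, of the irreducible locally closed sets $(B_-\times B_+)\cdot\{(w,w^{-1}D)\colon D\text{ distinct nonzero}\}$; taking closures identifies the two formulas for $E_w$, shows distinct $w$ give distinct $E_w$, and yields $\overline{E^\circ}=\bigcup_w E_w$.

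The remaining point --- and the main obstacle --- is that $E^\circ$ is dense in $E$, i.e.\ that $E=\overline{E^\circ}=\bigcup_w E_w$ with no further components; granting it, each $E_w$ (being irreducible of the minimal dimension $n^2+n$) is a component, and conversely every component, being irreducible and contained in $\bigcup_w E_w$, coincides with one of the $E_w$. I would try to prove this either by showing the $n^2-n$ defining equations form a regular sequence --- so $E$ is a complete intersection of the expected dimension $n^2+n$, with $\deg E=2^{\,n^2-n}$ --- together with a degree count matching $\sum_w\deg E_w$ against $2^{\,n^2-n}$ (this is presumably where the ``sum of powers of $2$'' of the abstract enters); or, more in keeping with the pipe-dream theme, by producing a flat torus or Gr\"obner degeneration of $E$ to a reduced union of coordinate subspaces whose top-dimensional pieces are enumerated by pipe dreams, each of dimension $n^2+n$. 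I expect this equidimensionality / no-extra-components step to be the genuinely hard part; the linear-algebra parametrization of $E^\circ$ above is routine.
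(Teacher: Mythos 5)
This theorem is not proved in the present paper --- it is recalled verbatim from \cite{Kn-uu} --- so there is no in-paper argument to compare against; I am judging your proposal on its own terms. The parts you carry out are correct and cleanly done: the invariance check, the verification that the orbits $(B_-\times B_+)\cdot(w,w^{-1}D)$ lie in $E$, the stabilizer and dimension count giving $\dim E_w=n^2+n$, the Krull lower bound $\dim\ge n^2+n$ for every component, and (the nicest part) the explicit triangular diagonalization $R=PDP^{-1}$, $L=(Qw)D(Qw)^{-1}$ showing that the locus $E^\circ$ where $XY$ is invertible with distinct eigenvalues is exactly $\bigsqcup_w(B_-\times B_+)\cdot\{(w,w^{-1}D)\colon D\text{ regular}\}$, which reconciles the two descriptions of $E_w$.

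The genuine gap is the one you name yourself: you never prove that $E=\overline{E^\circ}$, i.e.\ that the $E_w$ are \emph{all} of the components. This is the actual content of the theorem, and neither of your proposed repairs is carried out or even reduced to something checkable. Strategy (a) has two separate holes: first, the regular-sequence claim is equivalent to the upper bound $\dim E\le n^2+n$, which is not supplied by Krull (Krull only gives $\ge$) and requires a real argument (e.g.\ a stratification of $E$ by the rank/Jordan data of $XY$, or an explicit flat degeneration to something of known dimension); second, even granting that $E$ is a complete intersection of degree $2^{n^2-n}$, the B\'ezout comparison needs an \emph{independent} computation of $\sum_w\deg E_w$, and deriving that from Corollary~\ref{cor:GPDs} would be circular here, since the GPD class formula presupposes knowing the components of $E$. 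Strategy (b) (a Gr\"obner degeneration whose top-dimensional pieces are enumerated combinatorially) is essentially what the later degeneration theorem in this paper sketches, but again it is only named, not executed. So the proposal should be regarded as a correct treatment of the routine half plus an honest outline of the hard half, not a proof.
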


Hence the projection $(X,Y)\mapsto X$ of $E_w$ is the
\defn{matrix Schubert variety} $\barX_w := \overline{B_- w B_+}$,
abbreviated below as MSV,
whose $(B_-\times B_+)$-equivariant
cohomology class was shown in \cite{KM-Schubert} to be the double Schubert polynomial
$S_w(x_1,\ldots,x_n,y_1,\ldots,y_n)$.
The other projection $(X,Y)\mapsto Y$ has image $w_0 \barX_{w_0 w^{-1} w_0} w_0$.
\junk{whose class under the usual left-right action of $T\times T$ would be
$$ S_{w_0 w^{-1} w_0}(x_n,\ldots,x_1,y_n,\ldots,y_1),$$
as the outer $w_0$s reverse the alphabets. However, our action
of $B_-\times B_+$ is on the wrong sides (i.e. $(b,c)\cdot Y = c Y b^{-1}$)
so the correct polynomial is
$$ S_{w_0 w^{-1} w_0}(-y_n,\ldots,-y_1,-x_n,\ldots,-x_1). $$
Using the transpose symmetry, we can simplify this slightly to
$$ S_{w_0 w w_0}(x_n,\ldots,x_1,y_n,\ldots,y_1). $$
\rem{PZJ: I think the signs are missing in that last equation?
  AK: this is likely going to be tossed as too long, anyway}}

\begin{Theorem}\label{thm:GPDs}
  Let $(\CC^\times)^2$ act on $(Mat_{n\times n})^2$ by
  $(s,t)\cdot (X,Y) = (sX,tY)$, commuting with the $(B_-\times B_+)$-action,
  and write $\ZZ[A,B]$ for $H^*_{(\CC^\times)^2}(pt)$.
  The $(B_-\times B_+ \times (\CC^\times)^2)$-equivariant cohomology class of
  $E_w$ is $(A+B)^{-n}$ times the GPD polynomial $G_w$ from \S\ref{sec:intro}.
  (Since every pipe must turn East to North at some point, the overall
  $A+B$ exponent is nonnegative.)
\end{Theorem}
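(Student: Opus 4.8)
The plan is to peel off the factor $(A+B)^{-n}$ by a harmless enlargement of $E_w$, and then to identify the class of the enlargement with $G_w$ by a degeneration/recursion of the kind underlying every ``Schubert class $=$ pipe dream sum'' statement. For the first step, let $V\iso\CC^n$ be the space of diagonal matrices, made a $(B_-\times B_+\times(\CC^\times)^2)$-representation with $B_-\times B_+$ acting trivially and $(s,t)\cdot D=stD$, so each coordinate of $V$ has weight $A+B$ and $e(V)=(A+B)^n$. On the orbit $(B_-\times B_+)\cdot\{(w,w^{-1}D)\}$ one has $diag(XY)=D$ (conjugating a diagonal matrix by a triangular one fixes its diagonal), so $(X,Y)\mapsto diag(XY)$ is a well-defined equivariant morphism $E_w\to V$, and I let $\widetilde E_w\subseteq(Mat_{n\times n})^2\times V$ be its graph. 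A graph is isomorphic to its source, and the class of the graph of an equivariant morphism $E_w\to V$ inside $(Mat_{n\times n})^2\times V$ is $[E_w]\cdot e(V)$ (the image of an equivariant section of the trivial bundle $E_w\times V$ carries $e(V)$); hence $[\widetilde E_w]=(A+B)^n[E_w]$, so it suffices to prove $[\widetilde E_w]=G_w$, which also re-derives the divisibility remark since $[\widetilde E_w]$ is a polynomial.

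To identify $[\widetilde E_w]$ with $G_w$ I see two viable routes. The first is a Gröbner degeneration in the spirit of \cite{KM-Schubert}: start from defining equations of $\widetilde E_w$ — the vanishing of the strictly lower part of $XY$ and strictly upper part of $YX$, the relations $D=diag(XY)$, and the Fulton-type rank conditions of \cite{Kn-uu} that cut out the component $w$ — and degenerate one variable at a time, each step a geometric vertex decomposition $I\rightsquigarrow(\init I:\theta)\cap(\init I+(\theta))$, in an order engineered so that the surviving pieces are indexed by $GPDs(w)$ with the $\delta$-piece carrying class $wt(\delta)$; additivity of multidegree over a degeneration with reduced special fibre then gives $[\widetilde E_w]=\sum_\delta wt(\delta)=G_w$. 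The second, more in the authors' own idiom, is a transfer-matrix/Yang--Baxter argument: read $G_w$ as a lattice partition function, prove a recursion for $[E_w]$ under $w\mapsto ws_i$ and $w\mapsto s_iw$ coming from the $\mathbb P^1$-families $\overline{(B_-\times P_i^+)\cdot\{(w,w^{-1}D)\}}$ and $\overline{(P_i^-\times B_+)\cdot\{(w,w^{-1}D)\}}$ — necessarily by \emph{degree-preserving}, ``isobaric'' divided-difference operators in the $y$'s, resp.\ the $x$'s, since $\operatorname{codim}E_w$ does not depend on $w$ — match this to the corresponding local move on generic pipe dreams (sliding a strand across a column of tiles, a Yang--Baxter identity among the tile weights), and settle the base case, e.g.\ $w=w_0$, by computing $[E_{w_0}]$ directly. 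In either route, Theorem~\ref{thm:leadingTerms} supplies a strong consistency check: the $A$- and $B$-leading forms of the answer must agree with the classes attached to the two projections $E_w\to\barX_w$ and $E_w\to w_0\barX_{w_0w^{-1}w_0}w_0$.

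The hard part is Step 2, and each route has its own crux. In route (a) it is flatness of the degeneration and, above all, \emph{reducedness} of the special fibre, plus the combinatorial identification of the surviving branches with generic pipe dream tiles respecting the boundary data (West pipes $1,\dots,n$; North pipes $w^{-1}(1),\dots,w^{-1}(n)$; East and South blank), and a Hilbert-series count at the end to exclude spurious components; there is also a subtlety in the choice of ambient space, since $(Mat_{n\times n})^2\times V$ has too few weight-$(A+B)$ coordinates to host one coordinate subspace per GPD, so one must either enlarge it (a Bott--Samelson-type resolution carrying extra weight-$(A+B)$ coordinates) or run the decomposition iteratively, with the GPDs as its leaves rather than as components of one special fibre. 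In route (b) the crux is to establish the precise geometric operators — checking that the $\mathbb P^1$-families above have the expected bundle structure over the appropriate base, so that they really induce the stated isobaric operators on classes — and then that the matching combinatorial move is valid for every local tile configuration, bump and elbow tiles included. My guess is that route (b), done carefully, is the cleaner path.
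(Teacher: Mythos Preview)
Your route (b) is essentially the paper's approach: the paper's proof (labelled ``Sketch of proof'') cites \cite[\S4]{artic39} for the fact that the classes $[E_w]$ are uniquely determined by inductive divided-difference formul\ae, and then asserts that Yang--Baxter arguments show the $G_w$ satisfy the same formul\ae\ (pointing to \cite[prop.~6]{artic81} for the details in $K$-theory). So your guess that (b) is the cleaner path is borne out, and the paper does no more on that route than you do --- it too leaves the verifications to the references. Your route (a) is not the paper's proof of \emph{this} theorem, but it is exactly the content of the \emph{next} theorem in the paper, which exhibits an equivariant partial Gr\"obner degeneration of $E_w$ to a union of quadratic complete intersections $F_\delta$ indexed by $GPDs(w)$; notably, the paper there \emph{invokes} theorem~\ref{thm:GPDs} to rule out lower-dimensional discrepancies, confirming your worry that reducedness/absence of spurious components is the crux of that approach. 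Your preliminary graph trick to absorb the factor $(A+B)^n$ is correct and pleasant --- the shearing that moves the graph to $E_w\times\{0\}$ is equivariant on $E_w$, since $diag(XY)$ is $(B_-\times B_+)$-invariant once $XY$ is lower triangular --- but the paper does not bother with it, simply noting the divisibility combinatorially via the forced East-to-North turn on each pipe.
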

\begin{proof}[Sketch of proof]
  These cohomology classes are shown in \cite[\S 4]{artic39} to be uniquely determined by
  certain inductive divided difference formul\ae.
  It is not hard to show, using Yang--Baxter type arguments,
  that the generic pipe dream polynomials satisfy the same inductive
  formul\ae; see \cite[prop.~6]{artic81} for the $K$-theoretic version.
\end{proof}

In \cite[proposition 3]{artic39} we showed that one can compute the
equivariant classes of the projections of a $(\CC^\times)^2$-invariant
subvariety $Z \subseteq V \times W$ from the equivariant class of $Z$ itself,
using the $A$-leading and $B$-leading terms. Combining
theorem \ref{thm:GPDs} with the leading-term statements of
theorem \ref{thm:leadingTerms}, we recover the two standard formul\ae\
for double Schubert polynomials.

\begin{Corollary}\label{cor:GPDs}
  We compute the degree of $E_w$ by setting
  $x_\bullet = y_\bullet = 0$ to forget the $B_\pm$-actions, and $A = B = 1$
  for only the single scaling action.
  Hence $\deg E_w = \sum_{\delta\in GPDs(w)} 2^{\#\text{(tiles with turns)}-n}$.
\end{Corollary}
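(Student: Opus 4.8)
The plan is to obtain this directly from Theorem~\ref{thm:GPDs} by specializing the equivariant class, using the standard dictionary between equivariant classes and degrees of affine cones. Note first that $E_w$ is conical: it is invariant under the one-parameter subgroup $s\mapsto(s,s)$ of the $(\CC^\times)^2$ appearing in Theorem~\ref{thm:GPDs}, which scales every matrix entry of both $X$ and $Y$ by $s$. For an irreducible conical subvariety $Z$ of a representation $V$ of a torus $\mathbf T$ containing this scaling $\CC^\times$, the $\mathbf T$-multidegree of $Z$ restricts along $\CC^\times\hookrightarrow\mathbf T$ to the $\CC^\times$-multidegree of $Z$, which equals $(\deg Z)\,u^{\operatorname{codim}_V Z}$, where $u$ generates $H^*_{\CC^\times}(pt)$ and $\deg Z$ is the degree of the cone $Z$ (equivalently of its projectivization); this is just the homogeneity of multidegrees together with the computations for $V$ itself, for a hyperplane, and for the origin. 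Setting $u=1$ then extracts $\deg Z$, and the passage to the single scaling action automatically ``forgets'' the rest of $\mathbf T$.

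Next I would trace through the restriction map on equivariant parameters. Taking $\mathbf T$ to be the maximal torus of $B_-\times B_+$ times $(\CC^\times)^2$, the scaling $\CC^\times$ sits inside $\mathbf T$ as $s\mapsto(1,1,s,s)$, so $H^*_{\mathbf T}(pt)\to H^*_{\CC^\times}(pt)$ sends $x_\bullet\mapsto 0$ and $y_\bullet\mapsto 0$ (the $B_\pm$-factors see only the identity) and $A,B\mapsto u$. Since $B_\pm$ deformation retracts onto its maximal torus, the $\mathbf T$-class of $E_w$ is the polynomial $(A+B)^{-n}G_w$ of Theorem~\ref{thm:GPDs}; it is homogeneous of degree $n^2-n$, so in particular $\operatorname{codim}_{(Mat_{n\times n})^2}E_w=n^2-n$. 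Combining this restriction with the previous paragraph gives $\deg E_w=\left.(A+B)^{-n}G_w\right|_{x_\bullet=y_\bullet=0,\,A=B=1}$, which is exactly the prescription in the statement.

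It remains to evaluate the right-hand side from the weight formula \eqref{eq:wt}. Under $x_\bullet=y_\bullet=0$, $A=B=1$, every factor $A+x_i-y_j$ and every factor $B-x_i+y_j$ becomes $1$, while every factor $A+B$ becomes $2$. By \eqref{eq:wt} the factor $A+B$ is contributed precisely by the tiles that are neither straight-through (\plaqctr{c}, \plaqctr{v}, \plaqctr{h}) nor empty (\plaqctr{}), namely the two elbow tiles \plaqctr{j}, \plaqctr{r} and the bump tile \plaqctr{a}---exactly the tiles in which some pipe turns. Hence $wt(\delta)$ specializes to $2^{\#(\text{tiles of }\delta\text{ with a turn})}$; dividing by $(A+B)^n=2^n$ and summing over $\delta\in GPDs(w)$ yields the claimed formula. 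I expect no serious obstacle here: the only points needing care are citing the correct normalization of the cone/multidegree identity in the first step and confirming that ``tiles with turns'' is synonymous with ``tiles weighted $A+B$'' in the last.
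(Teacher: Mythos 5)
Your proposal is correct and follows exactly the route the paper intends (the corollary is stated essentially as its own proof): restrict the equivariant class of Theorem~\ref{thm:GPDs} to the diagonal scaling $\CC^\times$ by setting $x_\bullet=y_\bullet=0$, $A=B=1$, and observe that the $A+B$ factors are carried precisely by the turning tiles \plaqctr{j}, \plaqctr{r}, \plaqctr{a}, with the $2^{-n}$ coming from the prefactor $(A+B)^{-n}$. Your extra care with the cone/multidegree normalization is sound and fills in what the paper leaves implicit.
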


The proof of theorem~\ref{thm:GPDs} above does not explain where these GPD formul\ae\ come from (and in fact,
it is not how we found them). Similarly to the case of MSVs treated in \cite{KM-Schubert},
one can interpret them in terms of a Gr\"obner degeneration.
This has the added benefit that it gives us insight on ``how'' pipe dreams
{\em know about their connectivity,}
something which is not readily available in the study of MSVs.

\begin{Theorem}
There is an equivariant degeneration of $E_w$ to a union
  of quadratic complete intersections $F_\delta$ (possibly with some embedded
  components not affecting the $H^*$-class), one for each GPD $\delta$,
  whose classes are the individual terms in the formula for $(A+B)^{-n}G_w$.
\end{Theorem}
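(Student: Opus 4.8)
### Proof proposal

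The plan is to realize the degeneration via a torus action on the ambient space $(Mat_{n\times n})^2$ that rescales matrix entries, exactly as in the Gröbner degeneration of matrix Schubert varieties in \cite{KM-Schubert}, but now tracking the $(\CC^\times)^2$-weights $A$ and $B$ so that the resulting initial ideal decomposes along GPDs. Concretely, I would choose a one-parameter subgroup $\lambda$ acting on the entries $X_{ij}$ and $Y_{ij}$ by weights increasing along, say, antidiagonals, so that $\init_\lambda$ of the defining equations of $E_w$ (the entries of $XY$ in the lower part and of $YX$ in the upper part, together with the rank conditions cutting out $\barX_w$ in the first factor) becomes a squarefree-in-the-appropriate-sense monomial-times-linear system. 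The first step is therefore to write down explicit generators for $I(E_w)$ — combining the Fulton-type minors for $\barX_w$ with the bilinear equations $(XY)_{ij}=0$ for $i<j$ and $(YX)_{ij}=0$ for $i>j$ — and compute $\init_\lambda I(E_w)$.

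The second step is the combinatorial heart: identify the components of $V(\init_\lambda I(E_w))$ with the GPDs $\delta \in GPDs(w)$. Here I expect the pipe-dream tiles to encode, at each box $(i,j)$, which of the two "local" linear equations survives in the initial ideal: a $c$/$v$/$h$ tile should force a factor of type $X_{ij}$-flavor (contributing $A+x_i-y_j$ to the class), a blank tile a factor of $Y$-flavor (contributing $B-x_i+y_j$), and a turn tile no linear constraint at that box (contributing the "free" direction, whence the $A+B$ and the $(A+B)^{-n}$ normalization, since the $n$ East-to-North turns of the pipes are forced). The connectivity of the pipes is precisely what records the chains of substitutions propagating the leading terms of the bilinear equations across the board — this is the promised insight into "how pipe dreams know about their connectivity." I would prove that each such component $F_\delta$ is a complete intersection cut out by $n^2$ quadrics and linears (one per box), hence of the expected codimension, by an explicit triangularity/elimination argument on the variables.

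The third step is to upgrade the set-theoretic decomposition to the statement about $H^*$-classes: since the degeneration is flat and $(B_-\times B_+\times(\CC^\times)^2)$-equivariant, $[E_w] = [V(\init_\lambda I(E_w))] = \sum_\delta m_\delta\,[F_\delta]$ with multiplicities $m_\delta\ge 1$, and each $[F_\delta]$, being a product of quadrics/linears of the stated weights, equals $(A+B)^{-n}\,wt(\delta)$; possible embedded components contribute nothing to the cohomology class. Matching this against Theorem~\ref{thm:GPDs} forces $m_\delta=1$ for all $\delta$ and no spurious top-dimensional components, so the decomposition is exactly indexed by $GPDs(w)$.

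The main obstacle I anticipate is proving that the chosen term order actually produces this decomposition — i.e. that $\init_\lambda I(E_w)$ is generated by the "local" quadrics/linears and has no extra generators forcing further splitting or lower-dimensional junk, and that the resulting primary components really are in bijection with GPDs rather than with some larger or smaller index set. Controlling the initial ideal of the full bilinear-plus-rank system (as opposed to just the Fulton minors) is genuinely delicate; I would try to sidestep a direct Gröbner-basis computation by instead degenerating in stages — first the rank conditions (recovering the Knutson–Miller pipe-dream picture for the $X$-factor), then the bilinear equations $XY$, $YX$ on each resulting component — and checking at each stage that the Yang–Baxter/divided-difference recursion of Theorem~\ref{thm:GPDs} is compatible with the stagewise splitting, so that the count of top-dimensional components is pinned down by the known class of $E_w$ rather than by an ab initio ideal-theoretic argument.
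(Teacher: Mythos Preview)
Your overall architecture---an equivariant Gr\"obner-type degeneration, followed by matching the resulting class against Theorem~\ref{thm:GPDs} to pin down multiplicities and rule out extra top-dimensional pieces---is exactly what the paper does. But the mechanism you propose for \emph{why} the components are indexed by GPDs is too vague, and in one place concretely wrong.

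The paper's key device, which your proposal is missing, is the introduction of \emph{flux variables} $\Phi_e = \sum X_{ij}Y_{ji}$ (summed over squares to one side of the edge $e$). These satisfy a conservation law $\Phi_W+\Phi_S=\Phi_E+\Phi_N$ at every box, their boundary values are exactly $t_1,\dots,t_n$ on the West side, $t_{w^{-1}(1)},\dots,t_{w^{-1}(n)}$ on the North, and zero on South/East, and---crucially---they are homogeneous for the chosen weighting $[X_{ij}]=-ij$, $[Y_{ij}]=ij$, so they survive the degeneration intact. The degeneration is deliberately \emph{partial} (not all the way to monomials) precisely so that these quadratic flux relations persist. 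One then computes the initial form of the overlap $(XY)_> X - X(YX)_<$ to get $X_{ij}(\Phi_S-\Phi_E)=0$, which forces the conservation law to split box-by-box into either straight-through or diagonal propagation; inducting from the South/East boundary shows every flux equals some $t_i$ or $0$, and drawing a pipe where $\Phi_e=t_i$ literally produces a GPD. This is the precise content of ``how pipe dreams know their connectivity''---your ``chains of substitutions'' gestures at it but does not land on it.

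There is also a concrete gap in your first step. The ideal generated by the bilinear equations together with the Fulton minors in $X$ does \emph{not} cut out $E_w$: the bilinear equations alone cut out $E=\bigcup_v E_v$, and imposing $X\in\barX_w$ only restricts to $\bigcup_{v\geq w} E_v$. The paper never writes down generators for $I(E_w)$; instead it degenerates all of $E$ at once and then selects the $E_w$-part by fixing the boundary fluxes (equivalently, the GPD connectivity). Your proposed stagewise route---first degenerate Fulton minors \`a la \cite{KM-Schubert}, then the bilinear equations---would send you to a monomial limit in the $X$-variables and destroy the flux structure that encodes the connectivity, so it is not a viable workaround.
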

\begin{proof}[Sketch of proof]
  Write $t_i=(XY)_{ii}$, so that $t_{w^{-1}(i)}=(YX)_{i,i}$ in $E_w$.

  Introduce \defn{flux} variables on edges of the $n\times n$ square lattice by
\[
\Phi_e = \sum_{\substack{\text{squares $(i,j)$}\\\text{right of $e$ if $e$ vertical}\\\text{below $e$ if $e$ horizontal}}} X_{ij}Y_{ji}
\]
In particular, note that in $E_w$, the fluxes on boundary edges are fixed to be
zero on the South and East sides, $t_1,\ldots,t_n$ on the west side, $t_{w^{-1}(1)},\ldots,t_{w^{-1}(n)}$ on the north side, which matches the connectivity of GPDs associated to $E_w$.
Also, they satisfy the conservation equation at each square
\begin{equation}\label{eq:flux}
\begin{tikzpicture}[baseline=-3pt]
  {\setlength{\loopcellsize}{1.25cm}\plaq{}}
\node[shape=isosceles triangle,shape border rotate=90,inner sep=2pt,fill=blue,label={above:$\Phi_{\text{N}}$}] at (plaq.north) {};
\node[shape=isosceles triangle,shape border rotate=90,inner sep=2pt,fill=blue,label={below:$\Phi_{\text{S}}$}] at (plaq.south) {};
\node[shape=isosceles triangle,shape border rotate=0,inner sep=2pt,fill=blue,label={right:$\Phi_{\text{E}}$}] at (plaq.east) {};
\node[shape=isosceles triangle,shape border rotate=0,inner sep=2pt,fill=blue,label={left:$\Phi_{\text{W}}$}] at (plaq.west) {};
\end{tikzpicture}
\qquad
\Phi_W+\Phi_S=\Phi_E+\Phi_N
\end{equation}

Now consider the following degeneration of the whole scheme $E$:
start from its defining ideal $\mathcal I=\left<(XY)_>,(YX)_<\right>$,
give a weight
\[
  [X_{ij}]=-ij\qquad [Y_{ij}]=ij\qquad i,j=1,\ldots,n
\]
to each variable, and take the initial ideal $\init(\mathcal I)$ with
respect to the corresponding monomial
order.~\footnote{If we view $(Mat_{n\times n})^2$ as
  $T^*Mat_{n\times n}$, then this degeneration preserves the symplectic form.}
Note that this is only a {\em partial}\/ Gr\"obner degeneration,
because ties remain among monomials, which means that
$\init(\mathcal I)$ need not be a monomial ideal.  In particular,
fluxes, and \eqref{eq:flux}, are unaffected by the degeneration.

We need to compute some of $\init(\mathcal I)$. Consider the
``overlap'' $(XY)_> X - X (YX)_<$.
An easy calculation of its initial term leads to the equation
\[
  X_{ij}(\Phi_S-\Phi_E)=0\qquad i,j=1,\ldots,n
\]
for the degeneration of $E$,
where $\Phi_S$ and $\Phi_E$ are as in \eqref{eq:flux} at the square $(i,j)$.

Note that if $X_{ij}=0$, then $\Phi_S=\Phi_N$ and $\Phi_W=\Phi_E$. We conclude that
in each component of the degeneration of $E$, fluxes either propagate horizontal/vertically
or diagonally
(i.e., the equation \eqref{eq:flux} splits into two cases, with no more mixing of fluxes).
Now starting from the bottom/left and adding one square at a time, we conclude that
fluxes $\Phi_e$ can only take the values $0,t_1,\ldots,t_n$. Draw a pipe (labelled $i$)
across each edge such that $\Phi_e=t_i$, and leave the zero flux edges blank: one obtains
this way a GPD.

With a bit more work\rem{ahem}, one concludes that the degeneration of $E$ is contained
inside the union of $F_\delta$ over $\delta$ GPD, where
    \[
  F_\delta = \left\{ (X,Y)
\in Mat_{n\times n}^2:\ 
\begin{aligned}
\text{At each square }(i,j),\ X_{ij}&=0\text{ if $(i,j)$ is a crossing/straight pipe}
\\
Y_{ji}&=0\text{ if $(i,j)$ is blank}
\\
\text{At each edge }e,\ \Phi_e&=\begin{cases}t_i&\text{if pipe $i$ passes through $e$}\\0&\text{else}\end{cases}
\end{aligned}
\right\}
\]
up to lower-dimensional (necessarily embedded) components.
If we degenerate $E_w$ instead, this amounts to imposing the outgoing fluxes, i.e.,
that the connectivity of the GPDs be $w$. 

Finally, one checks that $F_\delta$ is a complete intersection of dimension
$n(n-1)$: each square provides one equation, but by easy linear algebra, among the flux equations, $n$ of them are redundant.
The cohomology class of $F_\delta$ can then be computed by taking the product of the weights of its equations, i.e.,
$wt(X_{ij})=A+x_i-y_j$, 
$wt(Y_{ij})=B-x_i+y_j$,
$wt(\Phi_e)=A+B$,
to be compared with \eqref{eq:wt}.

Because we already know equality of cohomology
classes according to theorem~\ref{thm:GPDs}, we conclude that the degeneration of $E_w$
can differ from the union of corresponding $F_\delta$ by at most lower-dimensional
embedded components.
\end{proof}

\subsection{Degree of the commuting variety}
The lower-upper scheme was invented in \cite{Kn-uu} to study the
\defn{commuting scheme} $C := \{(X,Y) \in (Mat_{n\times n})^2\colon XY = YX\}$.
Specifically, the first author showed that $C$ has a
degeneration to the lower-upper variety $E_1$ union (possibly)
some embedded components; ergo, $\deg C = \deg E_1$.
With corollary~\ref{cor:GPDs}, we can compute that as a $2$-enumeration:
\begin{Example}
  When $n=3$, $\deg C = 1 + 2 + 2 + 2 + 4 + 4 + 8 + 8 = 31$:\\
  {\setlength{\loopcellsize}{0.6cm}
\begin{tikzpicture}
\node[loop]{\plaq{j}&\plaq{v}&\plaq{v}\\\plaq{h}&\plaq{j}&\plaq{v}\\\plaq{h}&\plaq{h}&\plaq{j}\\};\end{tikzpicture} 
\begin{tikzpicture}
\node[loop]{\plaq{a}&\plaq{j}&\plaq{v}\\\plaq{j}&\plaq{}&\plaq{v}\\\plaq{h}&\plaq{h}&\plaq{j}\\};\end{tikzpicture} 
\begin{tikzpicture}
\node[loop]{\plaq{j}&\plaq{v}&\plaq{v}\\\plaq{h}&\plaq{a}&\plaq{j}\\\plaq{h}&\plaq{j}&\plaq{}\\};\end{tikzpicture} 
\begin{tikzpicture}
\node[loop]{\plaq{a}&\plaq{c}&\plaq{j}\\\plaq{c}&\plaq{j}&\plaq{}\\\plaq{j}&\plaq{}&\plaq{}\\};\end{tikzpicture} 
\begin{tikzpicture}
\node[loop]{\plaq{a}&\plaq{j}&\plaq{v}\\\plaq{a}&\plaq{h}&\plaq{j}\\\plaq{j}&\plaq{}&\plaq{}\\};\end{tikzpicture} 
\begin{tikzpicture}
\node[loop]{\plaq{a}&\plaq{a}&\plaq{j}\\\plaq{j}&\plaq{v}&\plaq{}\\\plaq{h}&\plaq{j}&\plaq{}\\};\end{tikzpicture} 
\begin{tikzpicture}
\node[loop]{\plaq{a}&\plaq{a}&\plaq{j}\\\plaq{a}&\plaq{j}&\plaq{}\\\plaq{j}&\plaq{}&\plaq{}\\};\end{tikzpicture} 
\begin{tikzpicture}
\node[loop]{\plaq{a}&\plaq{j}&\plaq{v}\\\plaq{j}&\plaq{r}&\plaq{j}\\\plaq{h}&\plaq{j}&\plaq{}\\};\end{tikzpicture} 
}
\end{Example}
This formula is quite computationally effective,
allowing us to go up to
\[
\deg C_{16}=8\,152\,788\,880\,952\,641\,347\,488\,179\,079\,698\,833\,772\,730\,621\,821\,001\,288\,826\,319\,965\,501\,665  
\]

See also \cite[theorem~4]{artic81} for
an independent proof of this formula for the (multi)degree of the
commuting variety.

\section{Schwartz--MacPherson classes of
  Kazhdan--Lusztig varieties and of some unions thereof}\label{sec:SM}
\newcommand\fl{G/B_+}
\subsection{Segre--Schwartz--MacPherson classes on $\fl$}
Let $X \subseteq Y$ be a locally closed subscheme in
a smooth ambient variety, over $\CC$. To this one associates a
\defn{Chern--Schwartz--MacPherson class} $\csm(X)$ in
$H_*^{BM}(Y) \iso H^{\dim_\RR Y-*}(Y)$,
where $H_*^{BM}$ denotes the Borel--Moore homology.
These classes are characterized by three properties:
\begin{enumerate}
\item If $X = X_1 \sqcup X_2$ is a disjoint union,
  then $\csm(X) = \csm(X_1) + \csm(X_2)$.
\item If $f:Y\to Z$ makes $X$ a bundle over $f(X)$ with fiber $F$, then
  $f_*(\csm(X)) = \csm(f(X))\, \chi_c(F)$, where $\chi_c$ is the compactly
  supported Euler characteristic.
\item If $X=Y$ is proper, then $\csm(X) = c(TY)$, 
  the total Chern class.
\end{enumerate}

\junk{In the original works on the subject, (e.g. MacPherson's proof of the Deligne--Grothendieck conjecture, that CSM classes exist) 
the classes defined are inhomogeneous, with $\hbar \mapsto -1$, taking $e(TY)$ to the total Chern class.
This loses no information but is frightfully unnatural, insofar as when one
considers cohomology as the associated graded of $K$-theory,
it {\em shouldn't contain} inhomogeneous elements.
\rem{might lose this unnecessary paragraph}}

These CSM classes behave well under pushforward;
for good Poincar\'e-duality properties, 
one defines the
\defn{Segre--Schwartz--MacPherson class} $\ssm(X\subseteq Y) := \csm(X)/c(TY)$,
which lives properly in a certain localization of $H^*(Y)$.
Here are two naturality properties these SSM classes possess:

\begin{Lemma}\label{lem:ssm}
  \begin{enumerate}
  \item Under the isomorphism $H^*(Y) \iso H^*(Y\times V)$,
    for $V$ a vector space, we have $\ssm(X) \mapsto \ssm(X \times V)$.
\junk{
    (There can of course be no similar statement for CSM classes, for degree
    reasons; however SSM classes all have formal degree $0$.)
    \rem{optional fact}}
\item \cite{Schurmann} Let $X_1,X_2 \subseteq Y$ be locally closed
  submanifolds,
  whose closures $\overline X_1,\overline X_2$ we stratify by manifolds
  (having $X_1,X_2$ as strata).
    If each stratum in $\overline X_1$ is transverse to each in
    $\overline X_2$, then
    $\ssm(X_1\cap X_2 \subseteq Y)$ is the product
    $    \ssm(X_1 \subseteq Y)\,    \ssm(X_2 \subseteq Y)$. 
  \end{enumerate}
\end{Lemma}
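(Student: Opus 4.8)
The plan is to prove (1) directly and to obtain (2) from \cite{Schurmann}. Write $\pi\colon Y\times V\to Y$ and $\pi_V\colon Y\times V\to V$ for the two projections; the isomorphism $H^*(Y)\iso H^*(Y\times V)$ in the statement is $\pi^*$ (a homotopy equivalence because $V$ is contractible), and it extends to the localizations in which the $\ssm$ classes live. Since $T(Y\times V)=\pi^*TY\oplus\pi_V^*TV$ and $TV$ is a trivial bundle, $c(T(Y\times V))=\pi^*c(TY)$, so for (1) it suffices to prove the matching identity for CSM classes, $\csm(X\times V\subseteq Y\times V)=\pi^*\csm(X\subseteq Y)$ as cohomology classes on $Y\times V$ (identifying Borel--Moore homology classes with their Poincar\'e duals); dividing by $c(T(Y\times V))=\pi^*c(TY)$ then yields $\ssm(X)\mapsto\ssm(X\times V)$.

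For that CSM identity I would quote the Verdier--Riemann--Roch statement for $c_{SM}$ along a smooth morphism: if $f\colon M\to N$ is smooth with relative tangent bundle $T_f$, then $[c_{SM}(f^*\phi)]=c(T_f)\cdot f^*[c_{SM}(\phi)]$ for any constructible function $\phi$, where $[\,\cdot\,]$ denotes the Poincar\'e-dual cohomology class; applying this to $f=\pi$ and $\phi=\mathbf 1_X$ (so $f^*\phi=\mathbf 1_{X\times V}$), the factor $c(T_f)=c(\pi_V^*TV)=1$ drops out. Equivalently one may invoke Kwieci\'nski's product formula $c_{SM}(\phi\boxtimes\psi)=c_{SM}(\phi)\boxtimes c_{SM}(\psi)$ with $\psi=\mathbf 1_V$, using that $V$ is smooth so $c_{SM}(\mathbf 1_V)=c(TV)\cap[V]=[V]$ and that crossing with the fundamental class $[V]$ is Poincar\'e dual to $\pi^*$. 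If one prefers to stay self-contained, one can argue by additivity to reduce to the case where $X$ is a closed subvariety of $Y$, and then, using the proper functoriality of $c_{SM}$ and induction on $\dim X$ via a resolution of $\overline X$ --- whose pushforward of $\mathbf 1$ differs from $\mathbf 1_{\overline X}$ only by indicator functions of lower-dimensional subvarieties, all compatibly with $(-)\times V$ --- reduce to $X$ smooth, where the assertion is simply $c(T(X\times V))=\pi^*c(TX)$.

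For (2), Sch\"urmann establishes the corresponding multiplicativity already at the level of $c_{SM}$: transversality of the stratifications makes the product of constructible functions $\mathbf 1_{X_1}\cdot\mathbf 1_{X_2}=\mathbf 1_{X_1\cap X_2}$ compatible with specialization and produces a ``cap-product'' expression for $\csm(X_1\cap X_2)$ in terms of $\csm(X_1)$ and $\csm(X_2)$; dividing all three classes by $c(TY)$ turns these cap products into the cup product of $H^*(Y)$, which is exactly $\ssm(X_1\cap X_2)=\ssm(X_1)\,\ssm(X_2)$. I do not expect a genuine obstacle in either half: the real content is supplied by the cited product/VRR formula (for (1)) and by Sch\"urmann's specialization theorem (for (2)), and what is left is bookkeeping --- keeping the K\"unneth and Poincar\'e duality isomorphisms straight, checking $c_{SM}(\mathbf 1_V)=[V]$, and verifying that the ``transverse strata'' hypothesis in the statement is literally the one used in \cite{Schurmann}. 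That last point is the only one I would treat with care, since transversality of stratified spaces can be formalized in more than one way.
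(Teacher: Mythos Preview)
The paper does not prove this lemma at all: it is stated and then immediately used, with part~(2) simply attributed to \cite{Schurmann}. So there is no ``paper's own proof'' to compare against; your proposal supplies what the authors left implicit.

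Your argument for (1) is correct. The reduction to a CSM identity via $c(T(Y\times V))=\pi^*c(TY)$ is the natural move, and any of your three routes (Verdier--Riemann--Roch for smooth morphisms, Kwieci\'nski's product formula with $\psi=\mathbf 1_V$, or the self-contained reduction to smooth $X$ by additivity and resolution) works. For (2) you are right that the content is entirely in Sch\"urmann's result and that the passage from his CSM statement to the SSM product formula is just dividing by $c(TY)$. Your caution about matching the transversality hypothesis to Sch\"urmann's is appropriate but not a gap: the stratified-transversality condition you describe is the standard one he uses.
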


All of the foregoing generalizes nicely to equivariant cohomology,
i.e.  if $T$ acts on $Y$ preserving $X$ then $\csm(X)$
can be defined in $H^*_{T}(Y)$. We adapt 
\cite[lemma~2]{GK-Schubert} to compute point restrictions of SSM classes
on the flag variety $\fl$, where $G$ is a semisimple algebraic group and $B_+$ is its Borel subgroup, 
giving an alternate proof of \cite[theorem 1.1]{CSu}.

Let $W$ be the Weyl group of $G$.
\begin{Lemma}\label{lem:pairings}
  Let $X^v_\circ := B_+vB_+/B_+$, $X^\circ_w := B_- wB_+/B_+$ be the Bruhat and
  opposite Bruhat cells in $\fl$. Then the bases $(\csm(X^v_\circ))_{v\in W}$,
  $(\ssm(X_w^\circ))_{w\in W}$ are dual bases under the Poincar\'e pairing
  $\langle \alpha,\beta \rangle := \int \alpha \beta$
  on $H^*_{T}(\fl)$ and localizations thereof.
\end{Lemma}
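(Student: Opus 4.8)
The plan is to reduce the Poincar\'e pairing of a CSM class against an SSM class to a compactly supported Euler characteristic of an open Richardson cell, exploiting the multiplicativity of SSM classes under transverse intersection, i.e.\ Lemma~\ref{lem:ssm}(2). The first ingredient I would invoke is the classical transversality of the two opposed Bruhat stratifications: by Richardson's theorem (equivalently Kleiman transversality for the $B_+$- and $B_-$-orbit stratifications of $G/B_+$), every stratum $X^{u}_\circ$ of $\overline{X^v_\circ}$ meets every stratum $X^\circ_{u'}$ of $\overline{X^\circ_w}$ transversally, and all of these strata are smooth ($T$-invariant affine cells fibered trivially over a point). Since the whole picture is $T$-equivariant and Lemma~\ref{lem:ssm} extends equivariantly, this gives
$$\ssm\bigl(X^v_\circ\cap X^\circ_w\bigr)=\ssm(X^v_\circ)\,\ssm(X^\circ_w)$$
in the appropriate localization of $H^*_T(G/B_+)$, where $X^v_\circ\cap X^\circ_w$ is the open Richardson cell $\mathcal R_{w,v}$ — smooth, $T$-invariant, and empty unless $w\le v$.

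Next I would unwind the pairing. Writing $\csm(Z)=c(T(G/B_+))\,\ssm(Z)$, we get
$$\bigl\langle\csm(X^v_\circ),\ssm(X^\circ_w)\bigr\rangle=\int_{G/B_+}c(T(G/B_+))\,\ssm(X^v_\circ)\,\ssm(X^\circ_w)=\int_{G/B_+}\csm(\mathcal R_{w,v}).$$
Now I would apply the equivariant refinement of MacPherson's functoriality (this is the case of property~(2) with target a point): pushing $\csm(\mathcal R_{w,v})$ to a point, all its components land in degrees $\le 0$ of $H^*_T(pt)$, and the surviving degree-$0$ part is exactly $\chi_c(\mathcal R_{w,v})\in\ZZ$. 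Hence the pairing equals $\chi_c(\mathcal R_{w,v})$, and the lemma is reduced to showing $\chi_c(\mathcal R_{w,v})=\delta_{v,w}$.

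For that last point: if $w\not\le v$ the cell is empty; if $w=v$ it is a single reduced point (dimension $\ell(v)-\ell(w)=0$), so $\chi_c=1$; and if $w<v$ it is positive-dimensional. In the last case I would observe that the only $T$-fixed point of $X^v_\circ$ is $vB_+$ and the only one of $X^\circ_w$ is $wB_+$ (Bruhat/Birkhoff decomposition), so $\mathcal R_{w,v}^T=\emptyset$; restricting the action to a generic one-parameter subgroup $\lambda\colon\mathbb G_m\to T$ — off the finitely many weight walls, so $\mathcal R_{w,v}^{\lambda}=\mathcal R_{w,v}^T=\emptyset$ — produces a fixed-point-free $\mathbb G_m$-action, whence $\chi_c(\mathcal R_{w,v})=\chi_c(\mathcal R_{w,v}^{\mathbb G_m})=0$. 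This proves $\langle\csm(X^v_\circ),\ssm(X^\circ_w)\rangle=\delta_{v,w}$. Finally, since $\csm(X^v_\circ)=[\overline{X^v_\circ}]+(\text{higher-degree terms})$ is unitriangular against the Schubert basis and $\ssm(X^\circ_w)=[\overline{X^\circ_w}]+(\text{higher-degree terms})$ is unitriangular against the opposite Schubert basis, both families are indeed bases, so the $\delta$-pairing identifies them as dual bases.

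The hard part will be the very first step — pinning down that the hypotheses of Lemma~\ref{lem:ssm}(2) truly apply here (the stratum-by-stratum transversality of the two opposed Bruhat stratifications, and the equivariant version of Sch\"urmann's multiplicativity) — together with the equivariant bookkeeping, namely that $\int_{G/B_+}\csm_T(Z)=\chi_c(Z)$ with no stray contributions and in the correct localized coefficient ring. An alternative route, consistent with our use of \cite[lemma~2]{GK-Schubert}, would be to compute both sides by Atiyah--Bott localization: extract the point restrictions $\csm(X^v_\circ)|_u$ and $\ssm(X^\circ_w)|_u$ from the adapted AJS/Billey-type formula and verify $\sum_{u\in W}\csm(X^v_\circ)|_u\,\ssm(X^\circ_w)|_u / e\bigl(T_u(G/B_+)\bigr)=\delta_{v,w}$ directly; this reduces the statement to a combinatorial identity in $W$ at the cost of the conceptual clarity of the Richardson-cell argument.
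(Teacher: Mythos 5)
Your proposal follows essentially the same route as the paper's proof: multiplicativity of SSM classes under the transverse intersection of the two Bruhat cells (Lemma~\ref{lem:ssm}(2)), pushforward of the CSM class to a point giving $\chi_c$ of the open Richardson cell, and then the $T$-fixed-point computation yielding $\delta_{vw}$. The extra details you supply (Kleiman/Richardson transversality, the generic one-parameter subgroup argument for $\chi_c(Z)=\chi_c(Z^T)$, and the unitriangularity check that the two families are genuinely bases) are correct elaborations of steps the paper leaves implicit.
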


\begin{proof}
  Let $\pi\colon \fl\to pt$ be the map to a point, so $\pi_*$ is
  integration over $\fl$. Then we compute the Poincar\'e pairings:
  \begin{alignat*}{3}
    \int_{\fl} \csm(X^v_\circ) \ssm(X_w^\circ) 
    &= \int_{\fl} \csm(X^v_\circ \cap X_w^\circ)
         && \text{\quad by lemma \ref{lem:ssm}(2)}\\
    &= \chi_c( X^v_\circ \cap X_w^\circ ) 
         && \text{\quad by property (2) of CSM classes} \\
    &= \chi_c( (X^v_\circ \cap X_w^\circ)^T ) 
         && \text{\quad a property of $\chi_c$} \\
    &= \chi_c( (X^v_\circ)^T \cap (X_w^\circ)^T )&
    =&\ \chi_c( \{vB_+/B_+\} \cap \{wB_+/B_+\} ) 
    = \delta_{vw} \quad \qedhere
  \end{alignat*}
\end{proof}

We will make use of a similar pair of cell decompositions of the
{\em degenerate Bott--Samelson variety} from \cite{KaruppuchamyParameswaran}.
For $Q$ a word (not necessarily reduced)
in the simple reflections of $G$'s Weyl group,
one associates a \defn{Bott--Samelson manifold} $BS^Q$ and $B_+$-equivariant map
$BS^Q \to \fl$. $BS^Q$ can be defined in terms of the \defn{heap} of $Q$, which
in type $A$ can be drawn as the dual quadrangulation of the wiring diagram of $Q$, e.g.,
\[
Q=r_3r_1r_2r_1\qquad \begin{tikzpicture}[x={(.5cm,0cm)},y={(0cm,-.5cm)},baseline={([yshift=-\the\dimexpr\fontdimen22\textfont2\relax]current  bounding  box.center)}]
\begin{scope}[every path/.style={draw}]
\path[fill=\linkpatternboxcolor] (-.2,0) node[above=-1mm] {$\ss \CC^1$} -- (1,1) node[above] {$\ss \CC^2$} -- (2,-0) node[above=-1mm] {$\ss \CC^3$} -- (3,1) node[above,xshift=1mm] {$\ss \CC^4$} -- (2,2) -- (1,3) -- (-.2,4) -- (-1.5,2) node[above,xshift=-1mm] {$\ss \CC^0$} -- cycle;
\path (1,1) -- (0,2);
\path (0,2) -- (-1.5,2);
\path (0,2) -- (1,3);
\path (1,1) -- (2,2);
\end{scope}
\end{tikzpicture}
\]
where vector subspaces of $\CC^n$
sit at every vertex of the diagram, satisfy left-to-right inclusion, and the vertices at the top of the picture
are fixed to form the standard flag $\CC^0\subset\CC^1\subset\cdots\subset\CC^n$. The map to the complete flag variety
$\fl$ is reading off the bottom flag.

This $BS^Q$ has a nice $B_+$-invariant cell decomposition
$BS^Q = \coprod_{R\subseteq Q} BS^R_\circ$ indexed by the $2^Q$ ``subwords'' of $Q$,
which also label the $T$-fixed points $(BS^R)^T$;
the cell $BS^R_\circ$ corresponds to imposing equality (resp.\ inequality) of top and bottom
subspaces of a square corresponding to a letter of $R$ (resp.\ of $Q\backslash R$).

\newcommand\BSd{TV}

There is a fascinating toric degeneration $BS^Q \rightsquigarrow \BSd^Q$
in which each stratum $BS^R := \overline{BS^R_\circ}$ degenerates to
a toric subvariety $\BSd^R$ (i.e., it stays irreducible).
This toric variety $\BSd^Q$ is modeled on a combinatorial cube $\square^Q$
\cite{HaradaYang} with vertices indexed by $2^Q$,
and the $\BSd^R$ correspond to the faces of $\square^Q$ containing
the bottom vertex (the empty subword).
As such $\BSd^Q$ has a second cell decomposition
$\BSd^Q = \coprod_{R\subseteq Q} \BSd_R^\circ$ transverse to the first,
whose closures correspond to
the faces containing the top vertex $R=Q$. These two cell decompositions
then enjoy the same dual-basis property as in lemma \ref{lem:pairings}.

This toric degeneration has the uncommon property of being smooth, and
consequently, there is a $T_c$-equivariant diffeomorphism $\BSd^Q \to BS^Q$
where $T_c$ is the maximal compact subgroup of the complex torus $T$.
One can carry the algebraic submanifolds
$\overline{\BSd_R^\circ} \subseteq \BSd^Q$ across this diffeomorphism
to give {\em non}algebraic submanifolds of $BS^Q$;
see \cite[chapter 18]{AndersonFulton}.
We involve this variety $TV^Q$
so as to use the same calculation as in lemma \ref{lem:pairings}.

\begin{Theorem}[see also \cite{CSu}]\label{thm:CSu}
  Let $Q$ be a word in the simple reflections of $W$, with product $v\in W$.
  Then the restriction $\ssm(X_w^\circ)|_v$ of the class $\ssm(X_w^\circ)$
  to the point $vB_+/B_+$ is given by the sum over all subwords $R\subseteq Q$
  with product $w$, of
  \begin{equation}\label{eq:main}
    \prod_{i\in I} \frac{\beta_i}{\beta_i+1} \prod_{i\not\in I} \frac{1}{\beta_i+1}\qquad \beta_i = \left(\prod_{j<i} r_{Q_j}\right) \cdot \alpha_{Q_i}\qquad R=\prod_{i\in I} Q_i
  \end{equation}
In type $A$, we can index these terms using pipe dreams made of \plaqctr{a} and \plaqctr{c}
in the heap of $Q$,
where the simple roots are parameterised as $\alpha_{r_i}=x_{i}-x_{i+1}$, and the roots $\beta_i$ can
be read off the diagram as the differences of labels propagating on parallel sides of squares,\\
 e.g., if $Q=r_3 r_4 r_2 r_1 r_2 r_3$, $R=r_3\_\,\_r_1r_2\_$,
\begin{tikzpicture}[execute at begin picture={\clip (-1.4239,-1.8845) rectangle ++(4.3798,3.7691);},x={(1.2821cm,0cm)},y={(0cm,-1.2821cm)},baseline={([yshift=-\the\dimexpr\fontdimen22\textfont2\relax]current  bounding  box.center)},line join=round,scale=.9]
\begin{scope}[every path/.append style={draw=black}]
\path[fill=\linkpatternboxcolor] (-.5,.877) -- (.5,.877) -- (1.266,1.5198) -- (1.766,.64279) -- (2.5321,-0) -- (1.766,-.64279) -- (1.266,-1.5198) -- (.5,-.877) -- (-.5,-.877) -- (-1,-0) -- cycle;
\path (1,-0) -- (1.766,-.64279);
\path (1,-0) -- (0,-0);
\path (-.5,-.877) -- (0,-0);
\path (-.5,.877) -- (0,-0);
\path (1,-0) -- (1.766,.64279);
\path (1.766,-.64279) -- (2.5321,-0);
\path (1,-0) -- (.5,-.877);
\path (1,-0) -- (.5,.877);
\end{scope}
\begin{scope}[every path/.append style={/linkpattern/edge,rounded corners}]
\path svg[xscale=1.2821cm,yscale=-1.2821cm] {M -.75 .4385 L -.25 -.4385 Q .25 -.4385 0 -.877} node[above] {$x_2$};
\path svg[xscale=1.2821cm,yscale=-1.2821cm] {M 0 .877 L .5 -0 Q .25 -.4385 .75 -.4385 L 1.516 -1.0813} node[above,xshift=2mm] {$x_4$};
\path svg[xscale=1.2821cm,yscale=-1.2821cm] {M .88302 1.1984 Q 1.133 .75989 .75 .4385 L -.25 .4385 L -.75 -.4385} node[above,xshift=-2mm] {$x_1$};
\path svg[xscale=1.2821cm,yscale=-1.2821cm] {M 1.516 1.0813 Q 1.133 .75989 1.383 .32139 Q 1.766 -0 1.383 -.32139 L .88302 -1.1984} node[above,xshift=-2mm] {$x_3$};
\path svg[xscale=1.2821cm,yscale=-1.2821cm] {M 2.1491 .32139 Q 1.766 -0 2.1491 -.32139} node[above] {$x_5$};
\end{scope}
\end{tikzpicture}
\quad
$\begin{aligned}\beta_1&=x_3-x_4\\\beta_2&=x_3-x_5\\\beta_3&=x_2-x_4\\\beta_4&=x_1-x_4\\\beta_5&=x_1-x_2\\\beta_6&=x_1-x_5
\end{aligned}$
\end{Theorem}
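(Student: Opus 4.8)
The plan is to pull $\ssm(X_w^\circ)$ back to a Bott--Samelson variety over $v$ and run there the dual-basis computation of Lemma~\ref{lem:pairings}, using the smooth toric degeneration $TV^Q$ both to produce the ``second'' (transverse) cell decomposition and to evaluate the point restriction. Fix a word $Q$ with product $v$, let $\mu\colon BS^Q\to\fl$ be its structure map, and let $p\in BS^Q$ be a $T$-fixed point with $\mu(p)=vB_+/B_+$. Since restricting a pulled-back class to a point equals restricting the original to the image point, $\ssm(X_w^\circ)|_v=\bigl(\mu^*\ssm(X_w^\circ)\bigr)|_p$, so it suffices (a) to identify $\mu^*\ssm(X_w^\circ)$ as a class on $BS^Q$ and (b) to restrict it to $p$.

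For (a): transport the algebraic cells $TV_R^\circ$ of the second decomposition of $TV^Q$ across the $T_c$-equivariant diffeomorphism to obtain a basis $\bigl(\ssm(TV_R^\circ)\bigr)_{R\subseteq Q}$ of (a localization of) $H^*_T(BS^Q)$. Because the two cell decompositions $\{BS^R_\circ\}$, $\{TV_R^\circ\}$ of $TV^Q$ are mutually transverse, the very calculation of Lemma~\ref{lem:pairings} --- $\int\csm(\mathrm{cell}_1)\,\ssm(\mathrm{cell}_2)=\chi_c(\mathrm{cell}_1\cap\mathrm{cell}_2)=\chi_c\bigl((\mathrm{cell}_1\cap\mathrm{cell}_2)^T\bigr)=\delta$ --- shows that $\bigl(\csm(BS^R_\circ)\bigr)_R$ and $\bigl(\ssm(TV_R^\circ)\bigr)_R$ are dual. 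Hence $\mu^*\ssm(X_w^\circ)=\sum_R c_R\,\ssm(TV_R^\circ)$ with $c_R=\int_{BS^Q}\mu^*\ssm(X_w^\circ)\,\csm(BS^R_\circ)=\int_\fl\ssm(X_w^\circ)\,\mu_*\csm(BS^R_\circ)$ by the projection formula. The geometric input is that $\mu$ restricted to each $B_+$-orbit cell $BS^R_\circ$ is a fibration onto a Bruhat cell $X^{u(R)}_\circ$ --- here $u(R)$ is the product of the subword $R$, read off from $\mu$ applied to the $T$-fixed point of $BS^R_\circ$ --- whose fiber, a quotient of unipotent groups since the relevant stabilizers share the torus, is an affine space with $\chi_c=1$; so property~(2) of CSM classes gives $\mu_*\csm(BS^R_\circ)=\csm(X^{u(R)}_\circ)$, and Lemma~\ref{lem:pairings} then yields $c_R=\delta_{w,u(R)}$. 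Therefore $\mu^*\ssm(X_w^\circ)=\sum_{R:\,u(R)=w}\ssm(TV_R^\circ)$, a sum over subwords of $Q$ with product $w$.

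For (b): compute $\ssm(TV_R^\circ)|_p$ on the smooth toric variety $TV^Q$, where $p$ is a vertex of the cube $\square^Q$. A $T$-neighbourhood of $p$ splits $T$-equivariantly into $\ell=|Q|$ one-dimensional coordinate directions with weights $\beta_1,\dots,\beta_\ell$, and in direction $i$ the locally closed cell $TV_R^\circ$ is either the single point $p$ (when $Q_i$ belongs to $R$) or the punctured line $\mathbb P^1\setminus\{p\}$ (when it does not) --- the toric shadow of the \plaqctr{c}/\plaqctr{a} dichotomy in the heap. Since SSM classes multiply under products (Lemma~\ref{lem:ssm}(1), plus multiplicativity of $\csm$) and on a $\mathbb P^1$ with tangent weight $\beta$ at the fixed point one has $\ssm(\{\mathrm{pt}\})|_{\mathrm{pt}}=\tfrac{\beta}{\beta+1}$ and $\ssm(\mathbb P^1\setminus\{\mathrm{pt}\})|_{\mathrm{pt}}=\tfrac{1}{\beta+1}$, the restriction equals $\prod_{i\in I}\tfrac{\beta_i}{\beta_i+1}\prod_{i\notin I}\tfrac{1}{\beta_i+1}$, where $I$ records which letters of $Q$ lie in $R$. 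Finally $\beta_i$, the weight of the $i$-th invariant curve through $p$, is $\bigl(\prod_{j<i}r_{Q_j}\bigr)\cdot\alpha_{Q_i}$, obtained by unwinding the iterated $\mathbb P^1$-bundle structure --- equivalently by reading labels along parallel sides of the $i$-th square of the heap, with $\alpha_{r_i}=x_i-x_{i+1}$ in type $A$. Summing over subwords $R$ with product $w$ and reindexing them as pipe dreams of \plaqctr{c}'s and \plaqctr{a}'s produces \eqref{eq:main}; that the sum is independent of $Q$ is automatic, the left side being so.

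The step I expect to be the main obstacle is the interface between $BS^Q$ and $TV^Q$ in part (a): one needs the smooth toric degeneration to carry $\{BS^R_\circ\}$ to $\{TV_R^\circ\}$ compatibly enough that $\csm(BS^R_\circ)$ and $\csm(TV_R^\circ)$ are identified under the (non-holomorphic) $T_c$-equivariant diffeomorphism --- equivalently, that these CSM classes respect the specialization isomorphism $H^*_T(BS^Q)\cong H^*_T(TV^Q)$. This is expected in the Harada--Yang / Anderson--Fulton framework cited above, but is where a real argument is needed; the rest --- the dual-basis identity on $TV^Q$, the affine-space fibers of $\mu$ over Bruhat cells, and the local toric computation --- is routine, parallel to the proof of Lemma~\ref{lem:pairings} with $TV^Q$ in place of $\fl$. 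One could instead avoid the diffeomorphism by imitating the Gr\"obner-degeneration proof of Theorem~\ref{thm:GPDs}, matching the class on $TV^Q$ with the one on $\fl$ directly along the degeneration family.
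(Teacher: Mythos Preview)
Your strategy is the paper's: pull back along $BS^Q\to\fl$, use the dual-basis pairing between $(\csm(BS^R_\circ))_R$ and $(\ssm(TV_R^\circ))_R$ on the degenerate Bott--Samelson (mirroring Lemma~\ref{lem:pairings}), identify $\mu_*\csm(BS^R_\circ)$, transpose, and restrict at the top vertex of $\square^Q$. Your local computation in~(b) is equivalent to the paper's, which writes $TV^\circ_R$ as a transverse intersection of coordinate hyperplanes and their complements in the chart $TV^Q_\circ\cong\CC^Q$ and invokes Lemma~\ref{lem:ssm}. The interface worry you flag is real, but the paper is equally terse there, simply asserting $\csm(TV^R_\circ)\mapsto\csm(BS^R_\circ)$ under the $T_c$-equivariant diffeomorphism.

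There is, however, a genuine gap in what you call routine: you claim that $\mu|_{BS^R_\circ}$ is a fibration onto a \emph{single} Bruhat cell $X^{u(R)}_\circ$ with affine fibre. This is false whenever the subword $R$ is not reduced. Already for $G=SL_2$, $Q=s_1s_1$, $R=\emptyset$, the big cell $BS^\emptyset_\circ=\{(L_1,L_2):L_1\neq\CC^1,\ L_2\neq L_1\}$ surjects onto all of $\mathbb P^1$, with fibre $\CC$ over $X^e_\circ$ but fibre $\CC^*$ over $X^{s_1}_\circ$. In general the image of $BS^R_\circ$ is a $B_+$-stable union of several cells and the fibres vary from cell to cell, so the assertion ``affine fibre, $\chi_c=1$'' does not by itself yield $\mu_*\csm(BS^R_\circ)=\csm(X^{u(R)}_\circ)$. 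The paper's remedy is not to control the image but to compute each fibrewise Euler characteristic via $T$-fixed points: writing $\mu_*\csm(BS^R_\circ)=\sum_w\chi_c(F_w)\,\csm(X^w_\circ)$, one has $\chi_c(F_w)=\chi_c((F_w)^T)=\chi_c(F_w\cap\{R\})=[\prod R=w]$ because $(BS^R_\circ)^T=\{R\}$. With this one-line replacement your argument becomes the paper's.
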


\begin{proof}
  We compose $\{Q\} \into \BSd^Q \xrightarrow\sim BS^Q \to \fl$.
  The image of the point $Q$ in $\fl$ is $vB_+/B_+$ by assumption.
  We first compute the pushforward map of the composite $\BSd^Q \to \fl$
  in one pair of bases, then transpose it to compute pullback in
  the dual bases.

  Push forward $\csm(\BSd^R_\circ) \mapsto \csm(BS^R_\circ)$ along the
  middle map, then consider the $B_+$-equivariant map $BS^R_\circ \to \fl$.
  Its image is some union $\union X^w_\circ$ of $B_+$-orbits, and 
  the map must be a bundle over each target orbit.
  (In fact a trivial bundle, as each $X^v_\circ$ is a {\em free} orbit under
  the subgroup $B_+ \cap (v [B_-,B_-] v^{-1})$).
  So far we know $\csm(BS^R_\circ) \mapsto \sum_w \chi_c(F_w) \csm(X^w_\circ)$,
  where $F_w$ is the fiber over $wB_+/B_+$. Now use
  $$ \chi_c(F_w) = \chi_c((F_w)^T)
  = \chi_c(F_w \cap (BS^R_\circ)^T)
  = \chi_c(F_w \cap \{R\}) = [R \in F_w] = \left[\,\prod R = w\,\right] $$
  where $[$assertion$] = 1$ if true, $0$ if false.
  In all, $\csm(\BSd^R_\circ) \mapsto \csm(X^{\prod R}_\circ)$.

  Transposing, $\ssm(X_w^\circ) \mapsto \sum \{ \ssm(\BSd_R^\circ)\colon
  R\subseteq Q,\ \prod R = w\}$.
  Consequently,
  $$ \ssm(X_w^\circ)|_v
  = \sum \left\{ \ssm(\BSd_R^\circ)|_Q \colon
    R\subseteq Q,\ \prod R = w \right\}
  $$
  Note that $\BSd^\circ_R = \bigcap_{r\in R} \BSd_r \cap\bigcap_{r\in Q\backslash R} (\BSd^Q\backslash \BSd_r) $,
  and this intersection is transverse in the sense of lemma \ref{lem:ssm}(2). 
To calculate the factor $\ssm(\BSd_R^\circ)|_Q$, we work in
  the open set $\BSd^Q_\circ \iso \CC^Q$, whose intersection with $\BSd_r$
  corresponds to the subset
  $\{ \vec v \in \CC^Q\colon v_r = 0\}$.
  \eqref{eq:main} then follows from lemma~\ref{lem:ssm}.
\end{proof}

It is a standard fact that the pullback of a {\em Schubert} class
$[\overline{B_- w P_+}/P_+] \in H^*(G/P_+)$ from a partial flag manifold,
along the projection $\fl \onto G/P_+$ ($P_+\supseteq B_+$),
is again a Schubert class
$[\overline{B_- w B_+}/B_+] \in H^*(\fl)$, where $w$ is the unique
smallest element in its coset $w W_P$.
The situation is more complicated for CSM and SSM
classes. The argument used in theorem \ref{thm:CSu} lets one show
$\pi_*(\csm(X^{v'}_\circ)) = \csm(X^v_\circ)$ for any $v' \in v W_P$,
so transposing, $\pi^*(\ssm(X_v^\circ)) = \sum_{f \in W_P} \ssm(X_{vf}^\circ)$.
Thus to compute the point restriction of a $G/P_+$ SSM class 
we can sum the pipe dreams over all $vW_P$.
\junk{, but there is a more efficient approach, which for concreteness we describe in type $A$.

\begin{Theorem}
  Regard $v W_P \in \mathcal S_n/W_P$ as a permutation with ambiguous positions,
  e.g. if $W_P = \mathcal S_k \times \mathcal S_{n-k}$ then the first $k$ values are
  in a jumbled-up bag, likewise the last $n-k$.
  Then $(w W_P)^{-1}$ has ambiguous {\em values}, hence can be encoded
  as a string $\sigma$ (in some ordered alphabet) with repeats, e.g. a
  binary string with content $0^k 1^{n-k}$.

  If $Q$ is a word for some permutation
  in $v W_P$, we can compute $\ssm(X_{w W_P}^\circ)|_{v W_P}$ as a sum
  over GPDs in $Q$'s heap, one end \rem{pick conventions} labeled by
  $\sigma$ in sorted order, the other by $\sigma$.
\end{Theorem}

\begin{proof}
  Let $\Delta$ be the set of theorem \ref{thm:CSu}'s $Q$-shaped GPDs
  for calculating $\sum_{f \in W_P} \ssm(X_{vf}^\circ)|_w$.
  In each $\delta \in \Delta$, consider the set $S(\delta)$ of squares
  whose two pipes have $W_P$-equivalent labels. Group $\Delta$ into
  equivalence classes $\Delta_S$ based on this set $S(\delta)$.
  Then the claim is that $\Delta_S$ has $2^{\#S}$ elements, and the sum of
  its GPDs' weights is computed by a single $Q$-shaped GPD with repeated labels.
\end{proof}
}
This gives us combinatorial formula\ae\ for SSM classes in $G/P_+$, which are spelled out in
\cite[lemma 2.4 and \S5]{artic80}.

\smallskip
Here we focus on a different direction, which will allow us to reconnect to GPDs.

\subsection{Open Kazhdan--Lusztig and matrix Schubert varieties}\label{ssec:KL}
Given a word $Q$ for $v\in\mathcal \mathcal S_n$,
define a modified weight for a GPD $\delta$
on the heap of $Q$:
\begin{equation}\label{eq:wt2}
\widetilde{wt}(\delta) = \prod_{\setlength{\loopcellsize}{0.4cm}\tikz[baseline=0pt]{\path (0,0.5);\plaq{}\path (plaq.east) --++(.1,0) node {$\ss x$};\path (plaq.north) --++(0,.15) node {$\ss y$};}}
\begin{cases}
  x-y & \text{ if } \plaqctr{c}\ \plaqctr{v}\ \plaqctr{h}\\[2mm]
  x-y+1 & \text{ if } \plaqctr{}\\[2mm]
  1  & \text{otherwise}
\end{cases}
\end{equation}
It differs from the original weight \eqref{eq:wt} by signs and the specialization $A=0$, $B=-1$.~\footnote{
We could have kept $B$ unspecialized, which would correspond to the natural homogenization of CSM classes in
$H^{\dim Y}(Y)[B] \iso H^{\dim Y}_{\CC^\times}(Y)
\iso H^{\dim Y}_{\CC^\times}(T^* Y) $,
where $B$ is interpreted as equivariant parameter for the scaling of the fiber of $T^*Y$.}

Given a set of GDPs, define the modified GPD polynomial to be the sum of its modified weights.
(If locations of endpoints and connectivity of pipes are fixed, then GPD and modified GPD polynomials only differ by an overall sign
and the specialization above.)

An equivalent formulation of theorem~\ref{thm:CSu} is:
\begin{Corollary}\label{cor:KL}
  The SSM class of the \defn{open Kazhdan--Lusztig variety}
  $X_w^\circ \cap X^v_\circ$ inside the cell $X^v_\circ$ is computed
  by the same formula \eqref{eq:main} as in theorem \ref{thm:CSu}. In type $A$, and taking the word $Q$ of $v$ to be reduced,
  the CSM class of $X_w^\circ \cap X^v_\circ$ inside $X^v_\circ$
  is given by the modified GPD polynomial 
  for the set of pipe dreams on the heap of $Q$ in theorem \ref{thm:CSu}.
\end{Corollary}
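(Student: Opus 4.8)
The plan is to read off both statements from Theorem~\ref{thm:CSu}, using only that CSM/SSM classes restrict to open subvarieties and that $\csm = \ssm\cdot c(T(-))$. For the first statement, observe that $X^v_\circ\subseteq\fl$ is open, so CSM classes localize: $\csm(X_w^\circ\subseteq\fl)|_{X^v_\circ}=\csm(X_w^\circ\cap X^v_\circ\subseteq X^v_\circ)$, and dividing by $c(T\fl)|_{X^v_\circ}=c(TX^v_\circ)$ gives $\ssm(X_w^\circ\subseteq\fl)|_{X^v_\circ}=\ssm(X_w^\circ\cap X^v_\circ\subseteq X^v_\circ)$. Now $X^v_\circ=B_+vB_+/B_+$ is a $T$-invariant affine space carrying a linear $T$-action with unique fixed point $vB_+/B_+$; such a space is $T$-equivariantly contractible onto that point, so restriction to it is an isomorphism $H^*_T(X^v_\circ)\xrightarrow\sim H^*_T(pt)$ sending $\ssm(X_w^\circ\cap X^v_\circ\subseteq X^v_\circ)$ to $\ssm(X_w^\circ)|_v$, which Theorem~\ref{thm:CSu} evaluates as \eqref{eq:main}. (Equivalently, one restricts the equality $\ssm(X_w^\circ)|_v=\eqref{eq:main}$ proved there, along the chain $\{Q\}\into\BSd^Q\xrightarrow\sim BS^Q\to\fl$ used in that proof.)

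For the second statement, take $Q$ reduced with product $v$; then $BS^Q_\circ\to X^v_\circ$ is an isomorphism, and, as in the proof of Theorem~\ref{thm:CSu}, it is identified $T_c$-equivariantly via the toric degeneration with $\BSd^Q_\circ\iso\CC^Q$. In that proof the hyperplane $\{v_{Q_i}=0\}$ contributed the factor $\beta_i/(\beta_i+1)$ to $\ssm$, which pins down the $T$-weight of the $i$-th coordinate of $\CC^Q$ as $\beta_i$; hence $c(TX^v_\circ)=\prod_i(1+\beta_i)$. Multiplying \eqref{eq:main} by this factor, each summand collapses,
\[
\prod_{i\in I}\frac{\beta_i}{\beta_i+1}\,\prod_{i\notin I}\frac{1}{\beta_i+1}\cdot\prod_i(1+\beta_i)=\prod_{i\in I}\beta_i,
\]
so the ($T$-equivariant) CSM class is $\csm(X_w^\circ\cap X^v_\circ\subseteq X^v_\circ)=\sum_{R\subseteq Q,\ \prod R=w}\prod_{i\in I}\beta_i$.

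It then remains to recognise the right-hand side as the modified GPD polynomial \eqref{eq:wt2}. A subword $R$ with $\prod R=w$ is exactly a GPD on the heap of $Q$ built from bump tiles \plaqctr{a} and cross tiles \plaqctr{c} with connectivity $w$: place a cross at each square indexed by $I$ and a bump elsewhere. By the ``parallel sides'' reading of the roots in Theorem~\ref{thm:CSu}, a cross at square $i$ carries two pipes whose labels differ by $\beta_i$, so by \eqref{eq:wt2} it weighs $x-y=\beta_i$, a bump weighs $1$, and there are no blank tiles; hence the modified weight of the GPD attached to $R$ is $\prod_{i\in I}\beta_i$, and summing over such $R$ gives exactly the modified GPD polynomial.

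The only genuine work is the bookkeeping in the last two paragraphs: confirming that the coordinate weights on $\BSd^Q_\circ$ are precisely the $\beta_i$ (so that $c(TX^v_\circ)=\prod_i(1+\beta_i)$), and checking that, after the toric degeneration, the subword $\leftrightarrow$ cross/bump dictionary together with the ``parallel sides'' reading of $\beta_i$ is consistent with the sign conventions built into \eqref{eq:wt2}; here it helps that for reduced $Q$ every $\beta_i$ is a positive root, so no sign ambiguity arises. Everything else is formal, given Theorem~\ref{thm:CSu} and Lemma~\ref{lem:ssm}.
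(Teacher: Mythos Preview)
Your argument for the first statement has a genuine gap: the Bruhat cell $X^v_\circ = B_+vB_+/B_+$ has dimension $\ell(v)$ and is \emph{not} open in $\fl$ (except when $v=w_0$), so the localization step ``$\csm(X_w^\circ\subseteq\fl)|_{X^v_\circ}=\csm(X_w^\circ\cap X^v_\circ\subseteq X^v_\circ)$'' is unjustified---CSM classes restrict along open inclusions, not arbitrary locally closed ones. The paper supplies exactly the missing geometric input: it passes instead to the big cell $U_v := vB_-B_+/B_+$, which \emph{is} open, and then invokes the Kazhdan--Lusztig lemma, a $T$-equivariant isomorphism of pairs
\[
(X_w^\circ \cap U_v \ \subseteq\ U_v)\ \cong\ (X_w^\circ \cap X^v_\circ \ \subseteq\ X^v_\circ)\times X_v^\circ,
\]
after which Lemma~\ref{lem:ssm}(1) strips off the affine factor $X_v^\circ$ and one is left with the restriction to the point $vB_+/B_+$ already computed in Theorem~\ref{thm:CSu}.

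Your treatment of the second statement---multiplying by $c(TX^v_\circ)=\prod_i(1+\beta_i)$ to clear denominators, then identifying $\prod_{i\in I}\beta_i$ with the modified weight \eqref{eq:wt2} of the cross/bump pipe dream for $R$---is correct and matches the paper's argument essentially verbatim.
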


\begin{proof}
  Let $U_v := vB_- B_+/B_+$ be the big cell in $\fl$ centered at the point $vB_+/B_+$. 
  The Kazhdan--Lusztig lemma 
  is a $T$-equivariant isomorphism of pairs
  $$ (X_w^\circ \cap U_v \ \subseteq\ U_v)
  \iso (X_w^\circ \cap X^v_\circ \ \subseteq\ X^v_\circ) \times X_v^\circ $$
  We then apply lemma \ref{lem:ssm}(1).

  To compute the CSM class we must multiply by the total Chern class $c(T X^v_\circ)$,
  which is nothing but $\prod_{i=1}^{|Q|}(1+\beta_i)$. \rem{justify, this is where we use $Q$ reduced}
  This removes the denominator in the expression of \eqref{eq:main}, so that
  now a \plaqctr{a} contributes $1$, whereas a \plaqctr{c} contributes
  $\beta_i=x-y$ where $x$ and $y$ are labels attached to the two sides of the square.
  This matches the weights of \eqref{eq:wt2} (noting that these GPDs have no blanks).
\end{proof}

One well-studied family of such varieties arises in the theory of
cluster algebras:
\begin{Proposition}\label{prop:dBru}
  Let $(B_+ u B_+) \cap (B_- v B_-) \subseteq GL_n$
  be $w_0$ times the \defn{double Bruhat cell}.
  Its $(T^n\times T^n)$-equivariant CSM class can be computed as
  the modified GPD polynomial for pipe dreams on a $n\times n$ square,
  made of \plaqctr{a} and \plaqctr{c},
  with boundary $1\ldots 2n$ down the West
  then along the South side, $u^{-1}(1)\ldots u^{-1}(n)$
  across the North, and $n+v^{-1}(1)\ldots n+v^{-1}(n)$
  down the East.  
\end{Proposition}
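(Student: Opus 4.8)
The plan is to deduce Proposition~\ref{prop:dBru} from Corollary~\ref{cor:KL} by exhibiting the $w_0$-shifted double Bruhat cell $(B_+uB_+)\cap(B_-vB_-)$ as an open Kazhdan--Lusztig variety $X_w^\circ\cap X^{v'}_\circ$ inside a suitable flag variety — not $G/B_+$ for $GL_n$, but the flag variety of $GL_{2n}$, or more precisely the partial flag variety $GL_{2n}/P$ where $P$ corresponds to the composition $(n,n)$. The key point is the classical dictionary (going back to the matrix Schubert picture of \cite{KM-Schubert}, and used systematically in the cluster-algebra literature on double Bruhat cells) that identifies $GL_n$ sitting inside a Grassmannian-type cell of $GL_{2n}/P$ via the graph embedding $X\mapsto \mathrm{rowspan}\begin{pmatrix} I & X\end{pmatrix}$: the condition that $X$ lie in $B_+uB_+$ becomes a Bruhat-cell condition with respect to one Borel, and the condition $X\in B_-vB_-$ becomes an opposite-Bruhat-cell condition, so that the intersection is exactly an open KL variety. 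One then picks $Q$ to be a reduced word for the relevant element $v'$ of $\mathcal S_{2n}$ whose heap is the $n\times n$ square — this is where the $n\times n$ square of tiles comes from, as the heap of the "staircase-free" reduced word $r_n r_{n-1}\cdots$ appropriate to the $(n,n)$-Grassmannian permutation. The boundary labels $1\ldots 2n$ running down the West and along the South, and the prescribed labels on the North and East sides, are precisely the wiring-diagram endpoints of this heap once one tracks how the graph embedding and the $w_0$-shift act on the strand labels; matching these against the description of $GPDs$ in \S\ref{sec:intro} identifies the relevant set of pipe dreams.

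Concretely, the steps in order: (i) set up the graph-embedding identification of $w_0\cdot\big((B_+uB_+)\cap(B_-vB_-)\big)\subseteq GL_n$ with $X_w^\circ\cap X^{v'}_\circ$ in $GL_{2n}/P$ (or in the full flag variety, pulling back along the projection and summing over $W_P$ as in the discussion following Theorem~\ref{thm:CSu}), computing $w$ and $v'$ explicitly in terms of $u,v$ and $w_0$; (ii) choose the reduced word $Q$ for $v'$ whose heap is the $n\times n$ square, and check that its wiring diagram has the stated boundary labels; (iii) invoke Corollary~\ref{cor:KL} to write the CSM class as the modified GPD polynomial over pipe dreams (made of \plaqctr{a} and \plaqctr{c}, no blanks) on this heap; (iv) translate the equivariant parameters: the $\beta_i=x-y$ of Corollary~\ref{cor:KL}, with $\alpha_{r_i}=x_i-x_{i+1}$ for $GL_{2n}$, must be shown to become the two independent $T^n\times T^n$ alphabets after the $w_0$-shift and the graph embedding — the shift is what turns "$x_i-x_{i+1}$ telescoping" into genuinely independent $x$'s and $y$'s on the two pairs of parallel sides of each square, matching \eqref{eq:wt2}.

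I expect the main obstacle to be step (iv) together with the bookkeeping in step (ii): getting the boundary labels exactly as stated — $1\ldots 2n$ down the West then along the South, $u^{-1}(1)\ldots u^{-1}(n)$ across the North, $n+v^{-1}(1)\ldots n+v^{-1}(n)$ down the East — requires being careful about (a) the inverses (which already appear in the $GPDs(w)$ convention and in Theorem~\ref{thm:CSu}'s "product $w$" versus subword conventions), (b) the $180^\circ$/transpose-type reflections that the $w_0$-shift introduces, and (c) how the two halves of the $GL_{2n}$ strand set distribute onto the four sides of the square. The cleanest way to pin all of this down is probably to verify it on small cases ($n=1$, $n=2$) after writing the general identification, and to lean on the already-established fact (Theorem~\ref{thm:leadingTerms} and the surrounding discussion) that the $n\times n$-square GPD formalism with these boundary conventions computes matrix-Schubert-type classes, so that only the CSM/SSM refinement and the parameter dictionary are genuinely new here. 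Everything else — the transversality needed for Lemma~\ref{lem:ssm}(2), the complete-intersection/Chern-class computation — is inherited verbatim from the proof of Corollary~\ref{cor:KL}.
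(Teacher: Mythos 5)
Your overall strategy --- realize the ($w_0$-shifted) double Bruhat cell as an open Kazhdan--Lusztig variety in a flag variety for $GL_{2n}$, note that the relevant ``cut-the-deck'' permutation has the $n\times n$ square as its heap, and quote Corollary~\ref{cor:KL} --- is the paper's strategy. But your step (i) has a genuine gap: the primary identification you propose, the graph embedding $X\mapsto\mathrm{rowspan}\begin{pmatrix}I&X\end{pmatrix}$ into the Grassmannian $GL_{2n}/P$ with $P$ of type $(n,n)$, cannot exhibit the double Bruhat cell as an open KL variety. Schubert cells in $Gr(n,2n)$ are indexed by $n$-element subsets of $[2n]$, so the pair (Bruhat cell, opposite Bruhat cell) containing $\mathrm{rowspan}\begin{pmatrix}I&X\end{pmatrix}$ records only two subsets and cannot remember the permutations $u$ and $v$. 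Concretely, already for $n=2$ the image of the $2$-dimensional piece $B_+\cap B_-=T$ sits as a proper subvariety of the $4$-dimensional open Richardson stratum of $Gr(2,4)$, not as a stratum itself. Your parenthetical fallback (pull back from $G/P$ and sum over $W_P$) does not repair this, because the double Bruhat cell is not the preimage of any locus in $G/P$; the summing-over-$W_P$ device after Theorem~\ref{thm:CSu} applies only to classes genuinely pulled back from $G/P$.

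What is needed --- and what the paper does --- is the Fulton isomorphism into the \emph{full} flag variety: $Mat_{n\times n}\xrightarrow{\sim}X^{\underline{n+1\ldots 2n\,1\ldots n}}_\circ\subseteq GL(2n)/B_+$, $M\mapsto\begin{bmatrix}M&I_n\\I_n&0\end{bmatrix}$. Under this map, $(B_+uB_+)\cap(B_-vB_-)$ is the pullback of the single open KL variety attached to $u\oplus v\in\mathcal S_{2n}$ with one-line notation $\underline{u(1)\ldots u(n)\ n{+}v(1)\ldots n{+}v(n)}$, as one checks from Fulton's essential-set description ($ess(u\oplus v)$ is block-diagonal with blocks $ess(u)$ and $ess(v)$); no summation over $W_P$ occurs. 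The issue you flag in step (ii) is also less delicate than you fear: $\underline{n+1\ldots 2n\,1\ldots n}$ is $321$-avoiding, hence fully commutative, so its heap is \emph{unique} and is the $n\times n$ square with the stated boundary labels forced. With the correct embedding in place, Corollary~\ref{cor:KL} then yields the formula, including the parameter dictionary of your step (iv).
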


\begin{proof}
  Let $u\oplus v \in \mathcal S_{2n}$ have one-line notation
  $\underline{u(1)\ldots u(n)\ n+v(1)\ldots n+v(n)}$.
  Its Fulton essential set is $ess(u\oplus v) =
  \begin{bmatrix}     ess(u) \\ & ess(v)  \end{bmatrix}$.
  The pullback of $X_{u \oplus v}^\circ$ along the Fulton isomorphism
  \cite{Ful-flags}
  $
  Mat_{n\times n} \xrightarrow\sim X^{\underline{n+1\ldots 2n\ 1\ldots n}}_\circ
  \subseteq GL(2n)/B_+$, $  M \mapsto \begin{bmatrix} M&I_n\\ I_n&0 \end{bmatrix}$
  is the double Bruhat cell. The cut-the-deck permutation
  $\underline{n+1\ldots 2n\ 1\ldots n}$ is 321-avoiding,
  hence is ``fully commutative'' i.e. has only one reduced word up
  to commuting moves. The unique resulting heap is the $n\times n$ square.
  Now apply corollary \ref{cor:KL}.
\end{proof}

Our final application is the following:
\begin{Theorem}
  Let $w$ be a \defn{partial permutation}, viewed as a matrix in $Mat_{k\times n}$.
  Then the CSM class of $B_-w B_+$ is the modified GPD polynomial of GPDs on a $k\times n$ rectangle,
  such that pipes come in from the West side,
  and the $i^{\rm th}$ pipe on the West side (counted from bottom to top) emerges on the North side
  at location $j$ if $w_{ij}=1$, anywhere on the East side otherwise.
\end{Theorem}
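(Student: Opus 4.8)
The plan is to deduce the statement from Corollary~\ref{cor:KL} (in the Grassmannian, equivalently partial-flag, form referenced just before it and worked out in \cite{artic80}), by realizing $Mat_{k\times n}$ with its $B_-\times B_+$-action as the big cell of a Grassmannian. Let $c\in\mathcal S_{k+n}$ be the ``cut-the-deck'' permutation with one-line notation $\underline{n{+}1\ \cdots\ n{+}k\ 1\ \cdots\ n}$. It is Grassmannian with its single descent at $k$, has length $\ell(c)=kn$, and --- being $321$-avoiding --- is fully commutative; its Young diagram, equivalently its heap, is precisely the $k\times n$ rectangle. The Fulton isomorphism $M\mapsto\begin{bmatrix}M&I_k\\ I_n&0\end{bmatrix}$ (up to the same $w_0$/sign adjustments used in the proof of Proposition~\ref{prop:dBru}, and following \cite{Ful-flags}) identifies $Mat_{k\times n}$ with the open cell $X^c_\circ$ of $Gr(k,k+n)$ in a way that intertwines the $B_-\times B_+$-action with the residual Borel action, and carries the residual torus $T^k\times T^n$ to a maximal torus of $GL_{k+n}$.

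Next I would extend the partial permutation $w$ to a genuine $\bar w\in\mathcal S_{k+n}$ by Fulton's completion rule: put $w$ in the top-left $k\times n$ block and complete monotonically in the remaining rows and columns to a permutation matrix. As in Fulton's treatment of degeneracy loci, this isomorphism then carries $\overline{B_-wB_+}$ to $\overline{X^\circ_{\bar w}}\cap X^c_\circ$, hence the orbit $B_-wB_+$ itself to the open Kazhdan--Lusztig variety $X^\circ_{\bar w}\cap X^c_\circ$. Corollary~\ref{cor:KL} (in its $G/P_+$ form) then expresses the equivariant CSM class of this variety inside $X^c_\circ$ as the modified GPD polynomial of the pipe dreams of Theorem~\ref{thm:CSu} on the heap of a reduced word for $c$, i.e.\ on the $k\times n$ rectangle. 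Because $c$ is Grassmannian, the directions lying in the parabolic $W_P=\mathcal S_k\times\mathcal S_n$ amount, on the pipe-dream side, to the ``invisible/repeated labels'' of the $G/P_+$ formula, which is exactly what promotes these to GPDs allowing blank tiles: the $k$ pipes entering from the West carry the labels $1,\dots,k$, and the Fulton completion of $w$ is precisely the rule making the $i$-th such pipe exit North at $j$ when $w_{ij}=1$ and otherwise exit somewhere on the East. Finally, identifying the wire-labels on the two parts of the rectangle with $x_1,\dots,x_k$ (rows) and $y_1,\dots,y_n$ (columns) and writing the simple roots of $GL_{k+n}$ as differences of these, the $\beta_i$ of \eqref{eq:main} attached to the square in position $(i,j)$ become $x_i-y_j$, and the formula specializes term-by-term to $\widetilde{wt}$ of \eqref{eq:wt2}.

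I expect the main obstacle to be the combinatorial heart of the middle step: checking that summing $\widetilde{wt}$ over subwords $R$ of the rectangle word with $\prod R=\bar w$ coincides with summing over the stated $k$-pipe GPDs-with-blanks on the rectangle, i.e.\ that Fulton's completion rule is exactly the one realizing the ``$w_{ij}=1\Rightarrow$ exit North, else exit East'' boundary --- this is the point at which the GPDs ``know about their connectivity''. A concrete way to do this is to work instead in the full flag variety $GL_{k+n}/B_+$, apply Corollary~\ref{cor:KL} literally, and then \emph{contract} the $n$ ``column pipes'' of the resulting $(k{+}n)$-pipe dreams: each column pipe runs monotonically and meets the $k$ row pipes only through \plaqctr{c} and \plaqctr{a} tiles, and erasing it converts those into the horizontal, turning, and blank tiles of \eqref{eq:wt2} without changing any weight, yielding the asserted $k$-pipe GPDs. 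Carrying out this bijection, together with the $w_0$/sign bookkeeping inherited from Proposition~\ref{prop:dBru} and the transversality input of Lemma~\ref{lem:ssm}, is the bulk of the argument; as a consistency check, when $k=n$ and $w$ is a genuine permutation the top-degree part of this CSM class is the fundamental class of the matrix Schubert variety, i.e.\ the double Schubert polynomial of Theorem~\ref{thm:leadingTerms}(1).
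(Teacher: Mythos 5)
Your overall architecture (Fulton embedding via the cut-the-deck permutation, Corollary~\ref{cor:KL}, then erasing the column pipes) is the same as the paper's, but the geometric step at its center is wrong as stated. For a genuinely partial permutation $w$, the Fulton isomorphism does \emph{not} carry the orbit $B_-wB_+$ to a single open Kazhdan--Lusztig variety $X^\circ_{\bar w}\cap X^v_\circ$: it carries the \emph{closure} $\overline{B_-wB_+}$ to $\overline{X^\circ_{\bar w}}\cap X^v_\circ$ for the minimal completion $\bar w$, but the open orbit itself is the \emph{disjoint union} $\bigsqcup_{w'} X^\circ_{w'}\cap X^v_\circ$ over \emph{all} full permutations $w'\in\mathcal S_{n+k}$ whose $k\times n$ corner is $w$ (the rank conditions on the NW submatrices of $\left[\begin{smallmatrix}M&I\\I&0\end{smallmatrix}\right]$ that involve the identity blocks are unconstrained on the orbit, so they stratify it into several cells). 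The paper's proof then just uses additivity of CSM classes over this disjoint union. Your attempt to recover the extra terms from the $G/P_+$ pullback formula does not work: the set of completions $w'$ of a partial permutation is not a coset of $W_P=\mathcal S_k\times\mathcal S_n$, since neither left nor right multiplication by $W_P$ preserves the corner block. This summation over all completions is exactly what produces pipes allowed to ``exit anywhere on the East,'' so it cannot be elided or replaced by a single $\bar w$.

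Second, in the contraction step your claim that erasing the column pipes proceeds ``without changing any weight'' misses the one place where something genuinely changes: when two \emph{erased} pipes meet each other, both the crossing and the bumping configuration collapse to the same blank tile, and the blank weight $x-y+1$ in \eqref{eq:wt2} is neither of the two original weights but their \emph{sum} $(x-y)+1$. The erasure is therefore $2^b$-to-$1$ on configurations with $b$ such meetings, and it is precisely the identity ``blank $=$ cross $+$ bump'' that makes the single modified GPD polynomial equal to the sum over all completions $w'$. The rest of your outline is sound: the identification of the $\beta$'s on the $k\times n$ rectangle with $x_i-y_j$ is correct, and the consistency check against Theorem~\ref{thm:leadingTerms}(1) when $w$ is a full permutation is the right sanity test.
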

\begin{proof}
Consider $v=\underline{n+1\ldots n+k\,1\ldots n}$. The setup is similar to that of
proposition~\ref{prop:dBru}, and
it is clear that via the Fulton isomorphism, \rem{is it?}
\[
  B_-w B_+ = \bigsqcup_{\substack{w'\in\mathcal \mathcal S_{n+k}\\ w'|_{k\times n}=w}} X^\circ_{w'}\cap X^v_o
\]
Therefore, $\csm(B_-w B_+)=\sum_{w'|_{k\times n}=w} \csm(X^\circ_{w'}\cap X^v_o)$,
with each of the summands being described
according to corollary~\ref{cor:KL} by pipe dreams in a $k\times n$ rectangle made of
\plaqctr{a} and \plaqctr{c} such that the pipes coming from the West
corresponding to rows of $1$s of $w$ come out North
as prescribed by $w$, whereas the other pipes coming from the West or South come
out North or East as prescribed
by $w'$. Summing over $w'$ removes this last condition. Now {\em erase}\/ all the pipes coming from the South,
noting that the only information that's lost is when two such paths bump into or cross
each other; but according to \eqref{eq:wt2},
the weight of a blank is the sum of the weight of a bump and of that of a cross.
\end{proof}

\begin{Example}
  If $w=\begin{pmatrix}1&0\\0&0
  \end{pmatrix}$, there are three GPDs:
  \[
\begin{tikzpicture}[baseline=-20pt]\node[loop] (l) {\plaq{j}&\plaq{r}\\\plaq{h}&\plaq{j}\\};
\end{tikzpicture}
\qquad
\begin{tikzpicture}[baseline=-20pt]\node[loop] (l) {\plaq{a}&\plaq{h}\\\plaq{j}&\plaq{}\\};
\end{tikzpicture}
\qquad
\begin{tikzpicture}[baseline=-20pt]\node[loop] (l) {\plaq{j}&\plaq{}\\\plaq{h}&\plaq{h}\\};
\end{tikzpicture}
\]
Renaming the variables (i.e., equivariant parameters) $x_i$, $i=1,\ldots,k$ for rows, $y_j$, $j=1,\ldots,n$ for columns, one finds
\begin{multline*}
\csm(B_-w B_+)=
  \big(x_1+x_2-y_1-y_2\big)
  +\big(x_2^2+x_1 x_2+y_2^2+y_1 y_2-x_2 y_1-2 x_2 y_2-x_1 y_2\big)
\\+\big(x_2 y_1 y_2+x_1 y_1 y_2+x_1 x_2^2-y_1 y_2^2-x_2^2
  y_2+x_2 y_2^2-x_1 x_2 y_1-x_1 x_2 y_2\big)
\end{multline*}
\end{Example}

In particular if $k=n$ and $w$ is a full permutation, we recover
the class of GPDs defined in \S\ref{sec:intro}, and the polynomials $G_w$ up to the sign $(-1)^{\dim(B_- w B_+)}$.

\section{Extensions to $K$-theory}
Our results admit (in some cases conjectural) $K$-theoretic versions. For instance, one can obtain using similar arguments as in \S\ref{sec:SM}
the \defn{motivic Chern class} of an open Kazhdan--Lusztig variety; all the theorems of \S\ref{ssec:KL} thus generalize,
using the same GPDs,
provided their weights are replaced with their $K$-theoretic analogues
\def\plaqctrc{\begin{tikzpicture}[baseline={([yshift=-\the\dimexpr\fontdimen22\textfont2\relax]current  bounding  box.center)}]\plaq{c}\path (plaq.west) --++(.1,.15) node {$\ss i$}; \path (plaq.south) --++(.1,.1) node{$\ss j$};\end{tikzpicture}}
\def\plaqctra{\begin{tikzpicture}[baseline={([yshift=-\the\dimexpr\fontdimen22\textfont2\relax]current  bounding  box.center)}]\plaq{a}\path (plaq.west) --++(.1,.15) node {$\ss i$}; \path (plaq.south) --++(.1,.1) node{$\ss j$};\end{tikzpicture}}

\[
\widetilde{wt}_K(\delta) = \prod_{\setlength{\loopcellsize}{0.4cm}\tikz[baseline=0pt]{\path (0,0.5);\plaq{}\path (plaq.east) --++(.1,0) node {$\ss x$};\path (plaq.north) --++(0,.15) node {$\ss y$};}}
\begin{cases}
  t^{\left[\setlength{\loopcellsize}{0.4cm}\plaqctr{v}\text{ or }i>j\right]}(1-y/x) & \text{ if } \plaqctrc\ \plaqctr{h}\ \plaqctr{v}\\[2mm]
  1-t\,y/x & \text{ if } \plaqctr{}\\[2mm]
  (1-t)(y/x)^{\left[\setlength{\loopcellsize}{0.4cm}\plaqctr{j}\text{ or }i<j\right]}  & \text{ if }\plaqctra\ \plaqctr{j}\ \plaqctr{r} \\[2mm]
\end{cases}
\]
  This allows to recover various formul\ae\ as special cases, e.g., those of \cite{RW-squarezero}.

Conjecturally, $K$-classes of lower-upper varieties should be given by $K$-theoretic GPD polynomials as well;
see in particular \cite[theorem~3]{artic81} for such a conjectural formula for the $K$-class of the commuting variety.

\gdef\MRshorten#1 #2MRend{#1}%
\gdef\MRfirsttwo#1#2{\if#1M%
MR\else MR#1#2\fi}
\def\MRfix#1{\MRshorten\MRfirsttwo#1 MRend}
\renewcommand\MR[1]{\relax\ifhmode\unskip\spacefactor3000 \space\fi
\MRhref{\MRfix{#1}}{{\scriptsize \MRfix{#1}}}}
\renewcommand{\MRhref}[2]{%
\href{http://www.ams.org/mathscinet-getitem?mr=#1}{#2}}
\bibliographystyle{amsalphahyper}
\bibliography{biblio}

\def\cprime{$'$}
\providecommand{\bysame}{\leavevmode\hbox to3em{\hrulefill}\thinspace}
\providecommand{\MR}{\relax\ifhmode\unskip\space\fi MR }
\providecommand{\MRhref}[2]{%
  \href{http://www.ams.org/mathscinet-getitem?mr=#1}{#2}
}
\providecommand{\href}[2]{#2}
\begin{thebibliography}{GZJ22}

\bibitem[AF24]{AndersonFulton}
David Anderson and William Fulton, \emph{Equivariant cohomology in algebraic
  geometry}, Cambridge University Press, 2024,
  \url{https://people.math.osu.edu/anderson.2804/ecag/index.html}.

\bibitem[Ful92]{Ful-flags}
William Fulton, \emph{Flags, {S}chubert polynomials, degeneracy loci, and
  determinantal formulas}, Duke Math. J. \textbf{65} (1992), no.~3, 381--420.
  \MR{1154177}.

\bibitem[GK20]{GK-Schubert}
Rebecca Goldin and Allen Knutson, \emph{Schubert structure operators and
  {$K_T(G/B)$}}, S\'em. Lothar. Combin. \textbf{82B} (2020),
  \href{http://arxiv.org/abs/1909.05283}{\path{arXiv:1909.05283}}.

\bibitem[GZJ22]{artic81}
Alexandr Garbali and Paul Zinn-Justin, \emph{Shuffle algebras, lattice paths
  and the commuting scheme}, Hypergeometry, Integrability and Lie Theory, 2022,
  Editors Erik Koelink, Stefan Kolb, Nicolai Reshetikhin and Bart Vlaar,
  \href{http://arxiv.org/abs/2110.07155}{\path{arXiv:2110.07155}}.

\bibitem[HY15]{HaradaYang}
Megumi Harada and Jihyeon~Jessie Yang, \emph{{N}ewton--{O}kounkov bodies of
  {B}ott--{S}amelson varieties and {G}rossberg--{K}arshon twisted cubes}, 2015,
  \href{http://arxiv.org/abs/1504.00982}{\path{arXiv:1504.00982}}.

\bibitem[KM05]{KM-Schubert}
Allen Knutson and Ezra Miller, \emph{Gr\"obner geometry of {S}chubert
  polynomials}, Ann. of Math. (2) \textbf{161} (2005), no.~3, 1245--1318.
  \MR{MR2180402 (2006i:05177)}.

\bibitem[Knu05]{Kn-uu}
Allen Knutson, \emph{Some schemes related to the commuting variety}, J.
  Algebraic Geom. \textbf{14} (2005), no.~2, 283--294,
  \href{http://arxiv.org/abs/math.AG/0306275}{\path{arXiv:math.AG/0306275}}.
  \MR{MR2123231 (2005j:14076)}.

\bibitem[KZJ14]{artic39}
Allen Knutson and Paul Zinn-Justin, \emph{The {B}rauer loop scheme and orbital
  varieties}, J. Geom. Phys. \textbf{78} (2014), 80--110,
  \href{http://arxiv.org/abs/1001.3335}{\path{arXiv:1001.3335}}.

\bibitem[KZJ21]{artic80}
\bysame, \emph{Schubert puzzles and integrability {II}: multiplying motivic
  {S}egre classes}, 2021,
  \href{http://arxiv.org/abs/2102.00563}{\path{arXiv:2102.00563}}.

\bibitem[LLS21]{LLS-BPD}
Thomas Lam, Seung~Jin Lee, and Mark Shimozono, \emph{Back stable schubert
  calculus}, Compositio Mathematica \textbf{157} (2021), no.~5, 883–962,
  \href{http://arxiv.org/abs/1806.11233}{\path{arXiv:1806.11233}}.

\bibitem[PP16]{KaruppuchamyParameswaran}
A.J. Parameswaran and Karuppuchamy Paramasamy, \emph{Toric degeneration of
  {B}ott--{S}amelson--{D}emazure--{H}ansen varieties}, 2016,
  \href{http://arxiv.org/abs/1604.01998}{\path{arXiv:1604.01998}}.

\bibitem[RW21]{RW-squarezero}
Piotr Rudnicki and Andrzej Weber, \emph{Characteristic classes of {B}orel
  orbits of square-zero upper-triangular matrices}, 2021,
  \href{http://arxiv.org/abs/2108.03598}{\path{arXiv:2108.03598}}.

\bibitem[Sch17]{Schurmann}
J{\"o}rg Sch{\"u}rmann, \emph{Chern classes and transversality for singular
  spaces}, Singularities in Geometry, Topology, Foliations and Dynamics
  (Cisneros-Molina, Tr{\'a}ng L{\^e}, Oka, and Snoussi, eds.), Springer, 2017,
  pp.~207--231,
  \href{http://arxiv.org/abs/1510.01986}{\path{arXiv:1510.01986}}.

\bibitem[Su15]{CSu}
Changjian Su, \emph{Restriction formula for stable basis of {S}pringer
  resolution}, 2015,
  \href{http://arxiv.org/abs/1501.04214}{\path{arXiv:1501.04214}}.

\end{thebibliography}

\end{document}